\numberwithin{equation}{section}
\theoremstyle{plain}
\newtheorem{theorem}[equation]{Theorem}
\newtheorem{conjecture}[equation]{Conjecture}
\newtheorem{lemma}[equation]{Lemma}
\newtheorem{corollary}[equation]{Corollary}
\newtheorem{proposition}[equation]{Proposition}
\theoremstyle{definition}
\newtheorem{remark}[equation]{Remark}
\newtheorem{nonsec}[equation]{}
\theoremstyle{remark}
\newcommand{\R}{\mathbb{R}}
\newcommand{\B}{\mathbb{B}}
\newcommand{\uhp}{\mathbb{H}}
\newcounter{alphabet}
\newcounter{minutes}\setcounter{minutes}{\time}
\newcounter{hours}\setcounter{hours}{\time}
\begin{document}
\bibliographystyle{amsplain}
\title
{Inequalities for geometric mean distance metric}

\def\thefootnote{}
\footnotetext{
\texttt{\tiny File:~\jobname .tex,
          printed: \number\year-\number\month-\number\day,
          \thehours.\ifnum\theminutes<10{0}\fi\theminutes}
}
\makeatletter\def\thefootnote{\@arabic\c@footnote}\makeatother

\author[O. Rainio]{Oona Rainio}

\keywords{Hyperbolic geometry, hyperbolic metric, hyperbolic type metrics, triangular ratio metric}
\subjclass[2010]{Primary 51M10; Secondary 51M16}
\begin{abstract}
We study a hyperbolic type metric $h_{G,c}$ introduced by Dovgoshey, Hariri, and Vuorinen. We find the best constant $c>0$, for which this function $h_{G,c}$ is a metric in specific choices of $G$. We give several sharp inequalities between $h_{G,c}$ and other hyperbolic type metrics, and also offer a few results related to ball inclusion.
\end{abstract}
\maketitle

\noindent Oona Rainio$^1$, email: \texttt{ormrai@utu.fi}, ORCID: 0000-0002-7775-7656,\newline
1: University of Turku, FI-20014 Turku, Finland\\
\textbf{Funding.} My research was funded by Magnus Ehrnrooth Foundation.\\
\textbf{Acknowledgements.} I am thankful for the referee for their useful suggestions and corrections.\\
\textbf{Data availability statement.} Not applicable, no new data was generated.\\
\textbf{Conflict of interest statement.} There is no conflict of interest.


\section{Introduction}

Hyperbolic geometry is an interesting area of research, crucial for instance to the study of quasiconformal and quasiregular classes of mappings \cite{hkv,v71,vuo}, the fusion of the $n$-dimensional analytic hyperbolic geometry and the special theory of relativity \cite{u22}, the theory of metric spaces with non-positive curvature \cite{b99}, and embeddings of complex networks to hyperbolic spaces \cite{a22,c16,f19,m17}. The hyperbolic metric itself is used in the research of several different types of mappings because of its invariance properties \cite{s20,v71,z21} but this metric can be defined only in special cases for three- or higher-dimensional domains. However, several hyperbolic type metrics have been introduced to be used as substitutes for the hyperbolic metric in general domains \cite{d16,fmv1,GO79,h,inm,s23}.

In a domain $G\subsetneq\R^n$ and for a constant $c>0$, we define a function $h_{G,c}:G\times G\to[0,\infty)$, 
\begin{align*}
h_{G,c}(x,y)=\log\left(1+\frac{c|x-y|}{\sqrt{d_G(x)d_G(y)}}\right),    
\end{align*}
where $d_G(x)$ denotes the Euclidean distance $\inf_{z\in\partial G}|x-z|$ from a point $x\in G$ to the boundary $\partial G$ of $G$ and log is the natural logarithm. This function was briefly presented as a bound for the hyperbolic metric in 1988 by Vuorinen \cite[Proof of Lemma 2.41(2), p. 30]{vuo} and studied in one-dimensional case in 2002 by Hästö \cite[Lemma 5.1, p. 51]{h} but, only in 2016, Dovgoshey, Hariri, and Vuorinen \cite{d16} proved it is a metric for all domains $G\subsetneq\R^n$ if $c\geq2$. More recently, this function $h_{G,c}$ has been researched in \cite{fmv1,s23,w23}. The function $h_{G,c}$ does not have an established name but we suggest the name \emph{geometric mean distance metric} or \emph{geometric mean distance function} to differentiate it from other similar hyperbolic type metrics.

In this article, we continue the earlier research of the function $h_{G,c}$. We both improve and extend the inequalities found earlier for this function. One of our main results is the following theorem concerning the function $h_{G,c}$ in the upper half-space $\uhp^n$. 

\begin{theorem}\label{thm_hHc}
The function $h_{\uhp^n,c}$ is a metric if and only if $c\geq1$. \end{theorem}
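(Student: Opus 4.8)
The plan is to reduce everything to the triangle inequality, since symmetry, nonnegativity, and the identity of indiscernibles are immediate from the form of $h_{\uhp^n,c}$ (the logarithm vanishes exactly when $x=y$). The essential first step is to rewrite $h_{\uhp^n,c}$ in terms of the hyperbolic metric $\rho=\rho_{\uhp^n}$. Writing $x_n,y_n$ for the heights, so that $d_{\uhp^n}(x)=x_n$, and using the standard half-space identity $\sinh(\rho(x,y)/2)=|x-y|/(2\sqrt{x_ny_n})$, I obtain
\[
h_{\uhp^n,c}(x,y)=\varphi_c(\rho(x,y)),\qquad \varphi_c(t):=\log\bigl(1+2c\sinh(t/2)\bigr).
\]
Thus $h_{\uhp^n,c}$ is a monotone reparametrization of a genuine metric, and the whole problem collapses to the one-variable analysis of $\varphi_c$.

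For the sufficiency direction ($c\ge1$) I would invoke the elementary principle that if $\varphi\colon[0,\infty)\to[0,\infty)$ is increasing with $\varphi(0)=0$, $\varphi(t)>0$ for $t>0$, and subadditive, then $\varphi\circ\rho$ is again a metric. Here $\varphi_c$ is clearly increasing with $\varphi_c(0)=0$, so only subadditivity requires work. Setting $a=s/2$, $b=t/2$, the inequality $\varphi_c(s+t)\le\varphi_c(s)+\varphi_c(t)$ becomes, after exponentiating and cancelling, $\sinh(a+b)\le\sinh a+\sinh b+2c\sinh a\sinh b$; expanding $\sinh(a+b)=\sinh a\cosh b+\cosh a\sinh b$, dividing through by $\sinh a\sinh b$, and using $(\cosh x-1)/\sinh x=\tanh(x/2)$, this is equivalent to
\[
\tanh(a/2)+\tanh(b/2)\le2c.
\]
Since $\tanh<1$, the left side is strictly below $2$, so the condition holds for every $c\ge1$, and the triangle inequality for $h_{\uhp^n,c}$ follows.

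The same computation drives the necessity direction ($c<1$), and here lies the one genuinely nonroutine point: a failure of subadditivity of $\varphi_c$ must be promoted to an actual failure of the triangle inequality for $h_{\uhp^n,c}$. I would exploit that $\rho$ is additive along hyperbolic geodesics. Concretely, take three points on the vertical axis of $\uhp^n$ at heights $h_1>h_2>h_3$, for which $\rho$ is literally additive and can realize any prescribed consecutive distances. Choosing both consecutive distances equal to a large $s$ with $\tanh(s/4)>c$, which is possible because $\tanh(s/4)\to1$ as $s\to\infty$, gives $\tanh(s/4)+\tanh(s/4)>2c$, hence $\varphi_c(2s)>2\varphi_c(s)$, so the triangle inequality fails for this triple.

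I expect the main obstacle to be precisely this conversion step. Everything on the sufficiency side is a clean one-variable computation once the reformulation $h_{\uhp^n,c}=\varphi_c\circ\rho$ is in hand, but the necessity argument needs a configuration on which $\rho$ attains the additive extreme; collinear points on a geodesic are exactly what make the subadditivity defect visible as a violated triangle inequality. The sharpness of the constant $c=1$ then appears transparently, as the threshold at which $\sup_{a,b>0}\bigl(\tanh(a/2)+\tanh(b/2)\bigr)=2$ is reached in the limit.
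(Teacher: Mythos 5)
Your proof is correct, and it follows the same overall strategy as the paper's \emph{alternate} proof of Theorem \ref{thm_hHc}: both rewrite $h_{\uhp^n,c}=\varphi_c\circ\rho_{\uhp^n}$ with $\varphi_c(t)=\log(1+2c\,\mathrm{sh}(t/2))$ (Proposition \ref{prop_idh}) and reduce the problem to a one-variable property of $\varphi_c$. The details differ in two places, both to your credit in terms of self-containedness. For sufficiency, the paper does not check subadditivity directly; it invokes the metric-preserving-function criterion that an increasing $f$ with $f^{-1}(0)=\{0\}$ and $f(t)/t$ decreasing is subadditive, and cites an external lemma for the monotonicity of $\varphi_c(t)/t$ when $c\geq1$. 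You instead prove the exact equivalence $\varphi_c(s+t)\leq\varphi_c(s)+\varphi_c(t)\Leftrightarrow \th(s/4)+\th(t/4)\leq 2c$ by a short hyperbolic-identity computation, which settles $c\geq1$ immediately since $\th<1$. For necessity, the paper uses its Lemma \ref{lem_c1g}, a boundary-approaching three-point configuration whose defect tends to $\log c<0$; this is stronger in that it rules out $c<1$ in \emph{every} domain (and is reused elsewhere in the paper), but it is a separate limit computation. Your necessity argument instead realizes the failure of subadditivity on a vertical geodesic, where $\rho_{\uhp^n}$ is additive, taking two equal consecutive distances $s$ large enough that $\th(s/4)>c$; this is correct and falls out of the same equivalence that gave sufficiency, so one computation yields both directions. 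The only mild caveat is that your division by $\mathrm{sh}\,a\,\mathrm{sh}\,b$ requires $a,b>0$, but the cases $s=0$ or $t=0$ are trivial, so nothing is lost. The paper's first (coordinate) proof, by contrast, is a genuinely different and more laborious route, minimizing a two-variable function after normalizing $z=e_n$.
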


The structure of this paper is as follows. In Section 3, we study what is the best constant $c$ for which $h_{G,c}$ fulfills the triangle inequality and give two proofs for Theorem \ref{thm_hHc}. In Section 4, we study the inequalities between $h_{G,c}$ and other hyperbolic type metrics, such as the distance ratio metric and the triangular ratio metric. In Section 5, we study the inclusion properties of the balls defined in the function $h_{G,c}$ between the Euclidean and hyperbolic balls.

\section{Preliminaries}

For a point $x\in G$, let $d_G(x)=\inf_{z\in\partial G}|x-z|$ as in Introduction. For two distinct points $x,y\in\R^n$, let $[x,y]$ be the Euclidean line segment between them. Let $e_1,...,e_n$ be the standard orthonormal basis for $\R^n$ and $x_1,...,x_n$ coordinates of a point $x\in\R^n$ so that $x=(x_1,...,x_n)=\sum^n_{i=1}x_ie_i$. Define now the upper-half space $\uhp^n=\{x\in\R^n\,:\,x_n>0\}$. For $x\in\R^n$ and $r>0$, let $B^n(x,r)$ be the $x$-centered Euclidean open ball with radius $r$ and $S^{n-1}(x,r)$ its boundary sphere. Use the simplified notation $\B^n$ for the unit ball $B^n(0,1)$. Similarly, for $G\subsetneq\R^n$, $x\in G$, $r>0$, and $c>0$, let $B_h(x,r)$ be the $x$-centered open ball with radius $r$ in the semimetric $h_{G,c}$ and $S_h(x,r)$ the corresponding sphere.

Denote the hyperbolic functions by sh, ch, and th, and their inverses by arsh, arch, and arth. The hyperbolic metric has formulas \cite[(4.8), p. 52 \& (4.14), p. 55]{hkv}
\begin{align*}
\text{ch}\rho_{\uhp^n}(x,y)&=1+\frac{|x-y|^2}{2x_ny_n},\quad x,y\in\uhp^n,\\
\text{sh}^2\frac{\rho_{\B^n}(x,y)}{2}&=\frac{|x-y|^2}{(1-|x|^2)(1-|y|^2)},\quad x,y\in\B^n.
\end{align*}
We can write that
\begin{align}\label{for_hypH}
\rho_{\uhp^n}(x,y)
=2\log\left(\frac{|x-y|}{2\sqrt{x_ny_n}}+\sqrt{\frac{|x-y|^2}{4x_ny_n}+1}\right)
=2\text{arsh}\left(\frac{|x-y|}{2\sqrt{x_ny_n}}\right).    
\end{align}
For a domain $G\subsetneq\R^n$, a point $x\in G$, and $r>0$, let $B_\rho(x,r)$ be the $x$-centered open ball with radius $r$ in the hyperbolic metric $\rho_G$ and $S_\rho(x,r)$ the corresponding sphere.

In \cite[Prop. 2.5(1), p. 1466]{d16}, one identity between the function $h_{\uhp^n,c}$ and the hyperbolic metric $\rho_{\uhp^n}$ was already presented but we can obtain a more simplified formula by using \eqref{for_hypH}.

\begin{proposition}\label{prop_idh}
For all $c\geq1$ and $x,y\in\uhp^n$, we have the identity
\begin{align*}
h_{\uhp^n,c}(x,y)=\log\left(1+2c\,{\rm sh}\frac{\rho_{\uhp^n}(x,y)}{2}\right).    
\end{align*}
\end{proposition}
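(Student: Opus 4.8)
The plan is to reduce the claimed identity to the second expression for the hyperbolic distance in \eqref{for_hypH}. First I would record that for the upper half-space the boundary is the coordinate hyperplane $\{x_n=0\}$, so the Euclidean distance from a point $x\in\uhp^n$ to $\partial\uhp^n$ is simply $d_{\uhp^n}(x)=x_n$. Substituting this into the definition of $h_{G,c}$ gives
\[
h_{\uhp^n,c}(x,y)=\log\left(1+\frac{c|x-y|}{\sqrt{x_ny_n}}\right).
\]

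Next I would use the rightmost form in \eqref{for_hypH}, namely $\rho_{\uhp^n}(x,y)=2\,\text{arsh}\bigl(|x-y|/(2\sqrt{x_ny_n})\bigr)$. Dividing by $2$ and applying $\text{sh}$ to both sides cancels the arsh and yields
\[
\text{sh}\frac{\rho_{\uhp^n}(x,y)}{2}=\frac{|x-y|}{2\sqrt{x_ny_n}}.
\]
Multiplying by $2c$ shows that the fraction $c|x-y|/\sqrt{x_ny_n}$ appearing above equals $2c\,\text{sh}(\rho_{\uhp^n}(x,y)/2)$, and substituting this into the logarithm produces exactly the asserted identity.

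There is essentially no obstacle here: the whole argument is a direct substitution, and the only point worth flagging is that the algebraic identity is in fact valid for every $c>0$, not merely for $c\geq1$. The restriction $c\geq1$ in the statement is imposed only so that $h_{\uhp^n,c}$ is genuinely a metric (by Theorem \ref{thm_hHc}); it plays no role in the computation itself. One could alternatively start from the $\text{ch}$-formula for $\rho_{\uhp^n}$ and invoke the half-angle identity $2\,\text{sh}^2(t/2)=\text{ch}\,t-1$ to recover $\text{sh}(\rho_{\uhp^n}/2)$, but routing through the arsh-expression in \eqref{for_hypH} avoids even that small computation.
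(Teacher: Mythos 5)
Your proof is correct and follows exactly the route the paper intends: the paper's proof is the one-line remark ``Follows from the formula \eqref{for_hypH},'' and you have simply written out that substitution ($d_{\uhp^n}(x)=x_n$, then ${\rm sh}(\rho_{\uhp^n}(x,y)/2)=|x-y|/(2\sqrt{x_ny_n})$ from the arsh-form of \eqref{for_hypH}). Your observation that the identity holds for all $c>0$ and that the hypothesis $c\geq1$ is not used in the computation is also accurate.
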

\begin{proof}
Follows from the formula \eqref{for_hypH}.   
\end{proof}

\begin{remark}
It follows from Proposition \ref{prop_idh} and the conformal invariance of the hyperbolic metric that, for any conformal mapping $f:\uhp^n\to\uhp^n=f(\uhp^n)$ and all $x,y\in\uhp^n$, we have
\begin{align*}
h_{\uhp^n,c}(f(x),f(y))=h_{\uhp^n,c}(x,y).    
\end{align*}    
\end{remark}

We will also use the following hyperbolic type metrics for a domain $G\subsetneq\R^n$:
The distance ratio metric introduced by Gehring and Osgood \cite{GO79} $j_G:G\times G\to[0,\infty)$, \cite[p. 685]{chkv}
\begin{align*}
j_G(x,y)=\log\left(1+\frac{|x-y|}{\min\{d_G(x),d_G(y)\}}\right), \end{align*}
its modification called the $j^*$-metric $j^*_G:G\times G\to[0,1],$ \cite[2.2, p. 1123 \& Lemma 2.1, p. 1124]{hvz}
\begin{align*}
j^*_G(x,y)={\rm th}\frac{j_G(x,y)}{2}=\frac{|x-y|}{|x-y|+2\min\{d_G(x),d_G(y)\}},    
\end{align*}
the triangular ratio metric introduced by P. H\"ast\"o in 2002 \cite{h} $s_G:G\times G\to[0,1],$ \cite[(1.1), p. 683]{chkv} 
\begin{align*}
s_G(x,y)=\frac{|x-y|}{\inf_{z\in\partial G}(|x-z|+|z-y|)}, 
\end{align*}
and the point pair function \cite[2.4, p. 1124]{hvz}, $p_G:G\times G\to[0,1),$ \cite[p. 685]{chkv}
\begin{align*}
p_G(x,y)=\frac{|x-y|}{\sqrt{|x-y|^2+4d_G(x)d_G(y)}}.  
\end{align*}
The point pair function is not a metric in all domains $G\subsetneq\R^n$ as it does not always fulfill the triangle inequality. The triangular ratio metric has been recently studied in \cite{sch,sinb,sqm} and the point pair function in \cite{dnrv,spf,fss}.

\section{The constant $c$}

\begin{lemma}\label{lem_c1g}
The function $h_{G,c}$ is not a metric in any domain for $c<1$.  \end{lemma}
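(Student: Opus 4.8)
The plan is to show that for every $c\in(0,1)$ and every domain $G\subsetneq\R^n$ the triangle inequality for $h_{G,c}$ fails at a suitable triple of points. The remaining metric axioms hold automatically for any $c>0$: the quantity $h_{G,c}(x,y)$ is nonnegative, vanishes exactly when $x=y$, and is symmetric, so establishing a single failure of the triangle inequality suffices.

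First I would build a local model in which $G$ behaves exactly like a half-space along one segment. Fix any $x_0\in G$ and a nearest boundary point $b\in\partial G$, so that $|x_0-b|=d_G(x_0)=:d$ and $B^n(x_0,d)\subseteq G$. Choosing coordinates with $b=0$ and $x_0=d\,e_n$, the key geometric observation is that along $[b,x_0]$ the distance to $\partial G$ equals the distance to $b$: for $p_\lambda=\lambda d\,e_n$ with $0<\lambda\le 1$ one has the internally tangent inclusion $B^n(p_\lambda,\lambda d)\subseteq B^n(x_0,d)\subseteq G$, giving $d_G(p_\lambda)\ge\lambda d$, while $d_G(p_\lambda)\le|p_\lambda-b|=\lambda d$; hence $d_G(p_\lambda)=\lambda d$. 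Thus three collinear points on this segment reproduce precisely the configuration of three points on a vertical line in $\uhp^n$.

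Next I would place the test points $x=s\,e_n$, $y=\mu\,e_n$, $z=e_n$ with $0<s<\mu<1$, normalizing $d=1$ (harmless, since the three $h_{G,c}$-values depend only on $s$ and $\mu$). Then $d_G(x)=s$, $d_G(y)=\mu$, $d_G(z)=1$, and the triangle inequality to be contradicted reads
\begin{align*}
1+\frac{c(1-s)}{\sqrt{s}}\;\le\;\left(1+\frac{c(\mu-s)}{\sqrt{s\mu}}\right)\left(1+\frac{c(1-\mu)}{\sqrt{\mu}}\right).
\end{align*}
Letting $s\to 0^+$ with $\mu$ fixed, both sides are of order $s^{-1/2}$, and comparing leading coefficients shows that this inequality fails precisely when $\sqrt{\mu}+c(1-\mu)<1$, i.e. when $\sqrt{\mu}<(1-c)/c$. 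Since $c<1$, such a $\mu\in(0,1)$ exists, and for it all sufficiently small $s$ yield $h_{G,c}(x,z)>h_{G,c}(x,y)+h_{G,c}(y,z)$.

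The main obstacle is the geometric reduction of the second step: one must be sure that an \emph{arbitrary} domain, with no regularity assumed on $\partial G$, still looks locally like a half-space along the chosen segment. The internally tangent ball identity $d_G(p_\lambda)=\lambda d$ is exactly what makes this rigorous and lets the estimate be carried out as if in $\uhp^n$. After that is in place, the remaining work is the routine asymptotic bookkeeping as $s\to 0$ together with the verification that a valid $\mu$ can be selected for each $c<1$.
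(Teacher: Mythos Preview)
Your proof is correct and follows essentially the same strategy as the paper: place three collinear points on the segment from an interior point to a nearest boundary point, use the internally tangent ball identity $d_G(p_\lambda)=\lambda d$ to compute all boundary distances exactly, and then violate the triangle inequality by pushing one (or two) of the points toward the boundary. The paper's execution differs only in parametrization: with the far point fixed at height $1$, it couples the other two heights as $k$ and $k^2$ and sends $k\to0^+$, which makes the two ``side'' terms equal and yields the clean limit
\[
h_{G,c}(x,z)+h_{G,c}(z,y)-h_{G,c}(x,y)\longrightarrow\log c<0,
\]
valid for every $c\in(0,1)$ without any preliminary choice. Your version instead fixes the middle height $\mu$ first and then sends $s\to0^+$, which works but requires the auxiliary selection $\sqrt{\mu}<(1-c)/c$. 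Both routes rest on the same geometric reduction and the same asymptotic mechanism; the paper's coupling just packages the limit more neatly.
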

\begin{proof}
The function $h_{G,c}$ clearly cannot be a metric for $c\leq0$ because it would not satisfy $h_{G,c}(x,y)>0$ for $x\neq y$. Assume then that $0<c<1$. For $x\in G$, fix $q\in S^{n-1}(x,d_G(x))\cap\partial G$, $z=x+(1-k)(q-x)$, and $y=x+(1-k^2)(q-x)$ with $0<k<1$. We have
\begin{align*}
\lim_{k\to0^+}(h_{G,c}(x,z)+h_{G,c}(z,y)-h_{G,c}(x,y)) 
=\lim_{k\to0^+}\log\left(\frac{(\sqrt{k}+c(1-k))^2}{k+c(1-k^2)}\right)
=\log(c)<0,
\end{align*}
so $h_{G,c}$ does not satisfy the triangle inequality.
\end{proof}

As mentioned already in \cite{d16}, it follows from Hästö's work \cite{h} that $h_{G,c}$ is a metric with $c\geq1$ in $G=\R^n\setminus\{0\}$, but let us explain how this result can be obtained in more detail.

\begin{lemma}\label{lem_c1R}
The function $h_{\R^n\setminus\{0\},c}$ is a metric if and only if $c\geq1$.    
\end{lemma}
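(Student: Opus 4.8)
The necessity of the bound is free: by Lemma \ref{lem_c1g} the function $h_{\R^n\setminus\{0\},c}$ is not a metric when $c<1$, so it remains to treat $c\ge1$. Positivity and symmetry are clear, and the whole difficulty is the triangle inequality. Since $d_{\R^n\setminus\{0\}}(x)=|x|$, I would first record, in exact analogy with Proposition \ref{prop_idh}, the representation
\begin{align*}
h_{\R^n\setminus\{0\},c}(x,y)=\log\left(1+2c\sinh\frac{\mu(x,y)}{2}\right),\qquad \cosh\mu(x,y)=1+\frac{|x-y|^2}{2|x||y|},
\end{align*}
which is immediate from $\sinh(\mu(x,y)/2)=|x-y|/(2\sqrt{|x||y|})$. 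This is formally the half-space formula with $x_ny_n$ replaced by $|x||y|$, and it factors the problem into two independent pieces: that $\mu$ is a metric on $\R^n\setminus\{0\}$, and that the one-variable function $\phi_c(s)=\log(1+2c\sinh(s/2))$ preserves metrics.

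The $\phi_c$ part is routine one-variable analysis and is where the constant $c=1$ actually enters. I would show that for $c\ge1$ the function $\phi_c$ is increasing, vanishes only at $0$, and is subadditive on $[0,\infty)$; then $\phi_c\circ\mu$ is a metric whenever $\mu$ is. Subadditivity follows once $s\mapsto\phi_c(s)/s$ is non-increasing, and for this one sets $g(s)=s\phi_c'(s)-\phi_c(s)$, so that $g(0)=0$ and $g'(s)=s\phi_c''(s)$. A short computation gives $\phi_c''(s)=c\bigl(\tfrac12\sinh(s/2)-c\bigr)/(1+2c\sinh(s/2))^2$, whence $g$ first decreases and then increases, with $g(s)\to-\log c$ as $s\to\infty$; for $c\ge1$ both relevant values $g(0)=0$ and $-\log c$ are nonpositive, so $g\le0$ throughout and $\phi_c(s)/s$ is non-increasing. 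The limit $-\log c$ explains why $c=1$ is exactly the threshold and matches the value $\log(c)$ appearing in Lemma \ref{lem_c1g}.

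The main obstacle is to prove that $\mu$ is a metric, i.e.\ that $\mu(x,z)\le\mu(x,y)+\mu(y,z)$. Applying $\cosh$ together with its addition formula and writing $A=\cosh\mu(x,y)$, $B=\cosh\mu(y,z)$, $C=\cosh\mu(x,z)$, this is equivalent to the single algebraic inequality
\begin{align*}
A^2+B^2+C^2-2ABC\le1,\qquad A=1+\frac{|x-y|^2}{2|x||y|},\ \ B=1+\frac{|y-z|^2}{2|y||z|},\ \ C=1+\frac{|x-z|^2}{2|x||z|}.
\end{align*}
Setting $\hat a=\tfrac{|x-y|^2}{2|x||y|}$, $\hat b=\tfrac{|y-z|^2}{2|y||z|}$, $\hat c=\tfrac{|x-z|^2}{2|x||z|}$, the defect $1-(A^2+B^2+C^2-2ABC)$ expands to $2(\hat a\hat b+\hat b\hat c+\hat c\hat a)-(\hat a^2+\hat b^2+\hat c^2)+2\hat a\hat b\hat c$, a Heron-type term plus the nonnegative correction $2\hat a\hat b\hat c$. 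The correction is essential: the naive triangle inequality for $|x-y|/\sqrt{|x||y|}$ already fails for three points on a half-line from the origin, so the Heron term alone can be negative and the geometric relations among the six lengths $|x|,|y|,|z|,|x-y|,|y-z|,|x-z|$ must be used to conclude nonnegativity. Here $\mu$ is invariant under orthogonal maps, scalings, and the inversion $x\mapsto x/|x|^2$, and the six lengths are intrinsic to the at most three-dimensional linear subspace spanned by $x,y,z$; I would use these symmetries to normalize (say $|y|=1$ in $\R^3$) and then verify the inequality by direct expansion, equality occurring when $x,y,z$ lie on a common half-line from the origin. This is the $\R^n\setminus\{0\}$ counterpart of H\"ast\"o's one-dimensional Lemma~5.1 in \cite{h}, and together with the analysis of $\phi_c$ it gives the metric property for all $c\ge1$.
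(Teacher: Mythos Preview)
Your route is genuinely different from the paper's, and the comparison is instructive, but the proposal has a real gap at its central step.

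\textbf{What the paper does.} The paper's proof is two lines: from the equivalence \eqref{equ_hcG} one sees that the left-hand side of the triangle inequality, rewritten as
\[
|x-z|\sqrt{d_G(z)d_G(y)}+|y-z|\sqrt{d_G(x)d_G(z)}+c|x-z||z-y|-|x-y|d_G(z)\ge 0,
\]
is monotone increasing in $c$, so it suffices to treat $c=1$; that case is exactly H\"ast\"o's Lemma~5.1 in \cite{h} (valid in every dimension, not only $n=1$), proved there via Ptolemy's inequality. The remark following the lemma recasts this as the Aksoy--Ibragimov--Whiting theorem for Ptolemaic spaces. Nothing about an auxiliary metric $\mu$ is needed.

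\textbf{What you do, and where it breaks.} Your factorisation $h_{\R^n\setminus\{0\},c}=\phi_c\circ\mu$ with $\phi_c(s)=\log(1+2c\,\mathrm{sh}(s/2))$ is correct, and your analysis showing $\phi_c(s)/s$ is nonincreasing for $c\ge1$ is fine (it is exactly the mechanism behind the paper's alternate proof of Theorem~\ref{thm_hHc}). The gap is the claim that $\mu(x,y)=2\,\mathrm{arsh}\bigl(|x-y|/(2\sqrt{|x||y|})\bigr)$ is itself a metric on $\R^n\setminus\{0\}$. This is \emph{strictly stronger} than what the lemma requires and is not implied by H\"ast\"o's result: since $\phi_1^{-1}(t)/t$ increases from $1$ to $2$, the inverse $\phi_1^{-1}$ is not subadditive, so one cannot recover the metric property of $\mu$ from that of $h_{\R^n\setminus\{0\},1}$. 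You acknowledge that the Heron term $2(\hat a\hat b+\hat b\hat c+\hat c\hat a)-(\hat a^2+\hat b^2+\hat c^2)$ can be negative and that the correction $2\hat a\hat b\hat c$ is essential, but the promised ``direct expansion'' is never carried out, and the Ptolemy inequality together with the ordinary triangle inequality alone do not suffice: with $s=\sqrt{|x|/|y|}$, $t=\sqrt{|z|/|y|}$ one only gets $\gamma\le\alpha t+\beta s$ with $s\le\alpha+\sqrt{1+\alpha^2}$, $t\le\beta+\sqrt{1+\beta^2}$, which bounds $\gamma$ by $\alpha\sqrt{1+\beta^2}+\beta\sqrt{1+\alpha^2}+2\alpha\beta$, overshooting the target $\alpha\sqrt{1+\beta^2}+\beta\sqrt{1+\alpha^2}$ by $2\alpha\beta$. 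So a genuine extra argument specific to Euclidean geometry is still missing.

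In short: either supply a complete proof that $\mu$ is a metric (a nontrivial statement in its own right), or adopt the paper's far more economical reduction---monotonicity in $c$ via \eqref{equ_hcG} followed by the Ptolemaic argument for $c=1$.
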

\begin{proof}
The function $h_{G,c}$ trivially fulfills the first properties of a metric for all $c>0$, so we only need to consider the triangle inequality. It is clear from the equivalence
\begin{equation}\label{equ_hcG}
\begin{aligned}
&h_{G,c}(x,y)\leq h_{G,c}(x,z)+h_{G,c}(z,y)\\
&\Leftrightarrow\quad
|x-z|\sqrt{d_G(z)d_G(y)}+|y-z|\sqrt{d_G(x)d_G(z)}+c|x-z||z-y|\\
&\quad\quad\,-|x-y|d_G(z)\geq0
\end{aligned}    
\end{equation}
that if $h_{G,c}$ is a metric for $c=1$, then this function is a metric for all $c\geq1$. The function $h_{\R^n\setminus\{0\},1}$ is a metric by \cite[Lemma 5.1, p. 51]{h}. By Lemma \ref{lem_c1g}, $h_{\R^n\setminus\{0\},c}$ is not a metric for $c<1$.
\end{proof}

\begin{remark}
if $(X,d)$ is a Ptolemaic metric space and $p\in X$, then the function
\begin{align*}
\tau_p(x,y)=\log\left(1+\frac{d(x,y)}{\sqrt{d(x,p)d(y,p)}}\right)   
\end{align*}
is a metric on $X\setminus\{p\}$. This result was proved by Aksoy, Ibragimov, and Whiting in \cite{a18} and it can be considered a generalization of Hästö's result \cite[Lemma 5.1, p. 51]{h} because the Euclidean distance is Ptolemaic.
\end{remark}

\begin{nonsec}
\textbf{Proof of Theorem \ref{thm_hHc}.}
\end{nonsec}
\begin{proof}
We only need to prove the triangle inequality $h_{\uhp^n,c}(x,y)\leq h_{\uhp^n,c}(x,z)+h_{\uhp^n,c}(z,y)$ for all $x,y,z\in\uhp^n$. Since $h_{\uhp^n,c}$ is invariant under stretching by a factor $r>0$ and translation in a direction of any vector in $\partial\uhp^n$, we can fix $z=e_n$ without loss of generality. Denote $u=|x-z|\geq|1-x_n|$ and $v=|z-y|\geq|1-y_n|$. Due to the equivalence \eqref{equ_hcG}, the triangle inequality holds if and only if
\begin{align*}
\sqrt{y_n}u+\sqrt{x_n}v+cuv-|x-y|\geq0.   
\end{align*}
Since
\begin{align*}
|x-y|\leq\sqrt{\sum^{n-1}_{i=1}x_i^2+\sum^{n-1}_{i=1}y_i^2+(x_n-y_n)^2}=\sqrt{u^2+v^2-2(1-x_n)(1-y_n)},    
\end{align*}
it is enough to prove that
\begin{align*}
f(u,v)\equiv\sqrt{y_n}u+\sqrt{x_n}v+cuv-\sqrt{u^2+v^2-2(1-x_n)(1-y_n)}\geq0.
\end{align*}
By differentiation,
\begin{align*}
&\frac{\partial}{\partial u}f(u,v)
=\sqrt{y_n}+cv-\frac{u}{\sqrt{u^2+v^2-2(1-x_n)(1-y_n)}}\geq0\\
&\Leftrightarrow\quad
(1-(\sqrt{y_n}+cv)^2)u^2\leq(\sqrt{y_n}+cv)^2(v^2-2(1-x_n)(1-y_n)),
\end{align*}
so, for all $u>0$, $f(u,v)$ can only have a stationary point that is a maximum. Same holds for $v$. Since $\lim_{u\to\infty}f(u,v)=\infty$, the minimum of $f(u,v)$ is obtained when $u=|1-x_n|$ and $v=|1-x_n|$. We can write
\begin{align*}
f_{\min}\equiv f(|1-x_n|,|1-y_n|)=\sqrt{y_n}|1-x_n|+\sqrt{x_n}|1-y_n|+c|1-x_n||1-y_n|-|x_n-y_n|.
\end{align*}
Suppose by symmetry that $x_n\leq y_n$. If $y_n\leq1$,
\begin{align*}
f_{\min}=(\sqrt{y_n}+x_n)(1-\sqrt{y_n})+\sqrt{x_n}(1-y_n)+c(1-x_n)(1-y_n)\geq0.    
\end{align*}
If $1<x_n\leq y_n$,
\begin{align*}
f_{\min}=(\sqrt{y_n}-1)(x_n-1)+(\sqrt{x_n}-1)(y_n-1)+c(x_n-1)(y_n-1)\geq0
\end{align*}
If $x_n\leq1<y_n$ and $c\geq1$,
\begin{align*}
f_{\min}=(\sqrt{y_n}-1)(1-\sqrt{x_n})(c(1+\sqrt{x_n})(1+\sqrt{y_n})-\sqrt{y_n}+\sqrt{x_n})\geq0.
\end{align*}
Consequently, $h_{\uhp^n,c}$ fulfills the triangle inequality if $c\geq1$ and, by Lemma \ref{lem_c1g}, we know that $c=1$ is the best constant here.
\end{proof}

We can also write another proof for Theorem \ref{thm_hHc} by using the identity of Proposition \ref{prop_idh} between the function $h_{G,c}$ and the hyperbolic metric in $\uhp^n$.

\begin{nonsec}
\textbf{Alternate proof of Theorem \ref{thm_hHc}.}
\end{nonsec}
\begin{proof}
If $f:[0,\infty)\to[0,\infty)$ is increasing such that $f^{-1}(0)=\{0\}$ and $f(x)/x$ is decreasing on $(0,\infty)$, then $f$ is a subadditive function and, consequently, $f\circ d$ is a metric for every metric space $(X,d)$. (See, for example, \cite[Thm 1 \& Prop. 2, p. 9]{d98} or \cite[Ex. 5.24(1)-(2), p. 80]{hkv}.) The hyperbolic metric is a metric in $\uhp^n$ and, by Proposition \ref{prop_idh}, $h_{\uhp^n,c}=f\circ\rho_{\uhp^n}$ where $f(x)=\log(1+2c\,{\rm sh}(x/2))$. Clearly, this function $f$ is increasing on $[0,\infty)$ with $f(0)=0$ and, according to Lemma \cite[Lemma 3.4, p. 6]{fmv1}, $f(x)/x$ is decreasing on $(0,\infty)$ for $c\geq1$. Thus, it follows that $h_{\uhp^n,c}$ is a metric if $c\geq1$ and, by Lemma \ref{lem_c1g}, $h_{\uhp^n,c}$ is not a metric if $c<1$.     
\end{proof}

Numerical tests suggest that the following result holds.

\begin{conjecture}\label{conj_hruv}
For any distinct points $u,v\in\R^n$, the function $h_{\R^n\setminus[u,v],c}$ is a metric if and only if $c\geq1$.
\end{conjecture}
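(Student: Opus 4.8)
The ``only if'' direction is immediate from Lemma \ref{lem_c1g}, which shows that $h_{G,c}$ fails the triangle inequality in every domain when $c<1$. For the ``if'' direction it suffices, by the equivalence \eqref{equ_hcG}, to establish the triangle inequality only for $c=1$, since the displayed chain there shows that metricity at $c=1$ forces metricity for all $c\geq1$. The function $h_{G,c}$ is invariant under the similarities of $\R^n$ that preserve $G=\R^n\setminus[u,v]$, so by a translation, a rotation and a scaling I would normalize the segment to $[u,v]=[-e_1,e_1]$. The residual symmetries, namely rotations about the $x_1$-axis and reflections in the coordinate hyperplanes, will be used to place the three points as conveniently as possible.

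Writing a point as $x=(x_1,x')$ with $x'=(x_2,\dots,x_n)$, the boundary distance is piecewise explicit:
\begin{align*}
d_G(x)=
\begin{cases}
|x'|, & -1\leq x_1\leq 1,\\
|x-e_1|, & x_1>1,\\
|x+e_1|, & x_1<-1.
\end{cases}
\end{align*}
Two structural features drive the argument. First, since $[u,v]$ is convex, $d_G$ is a convex, $1$-Lipschitz function; in particular $|d_G(x)-d_G(y)|\leq|x-y|$ for all $x,y$. Secondly, in the middle region $d_G$ coincides with the distance to the $x_1$-axis, so $h_{G,1}$ locally looks like a ``removed line'' function, whereas past an endpoint $d_G$ is the distance to a point and $h_{G,1}$ locally looks like $h_{\R^n\setminus\{e_1\},1}$, which is a metric by Lemma \ref{lem_c1R}.

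My plan is then to follow the template of the direct proof of Theorem \ref{thm_hHc}. Fixing $z$ and the boundary distances $d_G(x),d_G(y)$, the only negative contribution to the left-hand side of \eqref{equ_hcG} is $-|x-y|\,d_G(z)$, so the expression is smallest when $|x-y|$ is largest; I would first use the rotational symmetry about the axis together with a monotonicity argument to reduce to the configuration in which $x,y,z$ lie in a common half-plane bounded by the $x_1$-axis, collapsing the problem to essentially two dimensions. With the points so placed I would minimise the left-hand side of \eqref{equ_hcG} by the same calculus as in the half-space proof, checking that interior stationary points are maxima so that the minimum is attained on the natural boundary $|x-z|=|x_{\mathrm{proj}}-z|$, and then verify non-negativity region by region: when $x,y,z$ all project into the middle region the estimate reduces to the line case, when they all lie past a common endpoint it reduces via Lemma \ref{lem_c1R} to the point case, and the remaining mixed cases are handled by the $1$-Lipschitz bound on $d_G$ combined with the Euclidean triangle inequality $|x-y|\leq|x-z|+|z-y|$.

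The main obstacle is twofold. The first difficulty is the non-smoothness of $d_G$ across the planes $x_1=\pm1$: the minimisation in \eqref{equ_hcG} must be organised so that candidate minima on these interfaces are controlled, and since each of $x,y,z$ may lie in any one of three regions the number of mixed cases is large, so the book-keeping rather than any single estimate is the crux. The second difficulty is rigorously justifying the reduction to a half-plane, since the three radial directions of $x',y',z'$ need not be coplanar and one must show that the angular spread can be removed without increasing $h_{G,1}$. I expect the cleanest formulation to be the more general assertion that $h_{\R^n\setminus K,c}$ is a metric for $c\geq1$ whenever $K$ is convex, with convexity and the $1$-Lipschitz property of $d_G$ doing the real work; in the planar case $n=2$ the Joukowski-type conformal map $\zeta=z+\sqrt{z^2-1}$ carries $\R^2\setminus[-1,1]$ onto the exterior of the unit disc and could shortcut the analysis, but Liouville's theorem denies such a map for $n\geq3$, which is exactly why the direct estimate above seems unavoidable.
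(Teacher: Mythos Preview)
The paper does not prove this statement: it is stated as Conjecture~\ref{conj_hruv}, supported only by the remark that ``numerical tests suggest that the following result holds,'' and is then used heuristically to motivate the subsequent conjecture about convex complements. There is therefore no proof in the paper for you to be compared against.

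As for your proposal itself, it is an outline rather than a proof, and you are candid about this. The ``only if'' direction and the reduction to $c=1$ via \eqref{equ_hcG} are correct and routine. The normalisation to $[-e_1,e_1]$ and the piecewise formula for $d_G$ are fine. But the two obstacles you flag are genuine and are not resolved in what you wrote. In particular, the ``reduction to a common half-plane'' is not obviously licit: rotations about the $x_1$-axis preserve all three boundary distances, but they move \emph{all three} mutual distances $|x-y|,|x-z|,|z-y|$ simultaneously, and in \eqref{equ_hcG} two of these enter with a positive sign and one with a negative sign (and there is also the product term $c|x-z||z-y|$). Arguing that the minimum of the whole expression over angular configurations is attained in a planar one is therefore not just ``maximise $|x-y|$''; it is a constrained optimisation in the angles of $x',y',z'$ that you have not carried out. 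Likewise, the case analysis across the interfaces $x_1=\pm1$ is asserted but not executed, and experience with the half-space proof suggests that the mixed cases (one point in the slab, another past an endpoint) are exactly where a naive estimate can fail. Your closing remark that the cleanest statement might be the general convex-complement case is precisely the content of the paper's next conjecture, which is also left open.

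In short: there is no paper proof to match, and your proposal, while a reasonable plan of attack, does not yet constitute a proof of the conjecture.
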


If Conjecture \ref{conj_hruv} holds, the next result follows from it.

\begin{conjecture}
For any domain $G\subsetneq\R^n$ such that $\R^n\setminus G$ is a convex, connected set, the function $h_{\uhp^n,c}$ is a metric if and only if $c\geq1$.    
\end{conjecture}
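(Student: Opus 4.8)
The plan is to reduce, for each fixed triple of points, the convex complement to a single line segment and then apply Conjecture \ref{conj_hruv}. Necessity of $c\ge1$ requires no work specific to this domain: by Lemma \ref{lem_c1g} the function $h_{G,c}$ violates the triangle inequality in \emph{every} domain when $c<1$, so it remains only to establish the triangle inequality for $h_{G,c}$ when $c\ge1$ (here $G$ is the domain whose complement is convex and connected).

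First I would fix distinct $x,y,z\in G$ and set $E=\R^n\setminus G$, which is a closed convex set. Let $x^*$ and $y^*$ be the (unique) nearest points of $E$ to $x$ and $y$; by convexity the segment $[x^*,y^*]$ lies in $E$. Put $G'=\R^n\setminus[x^*,y^*]$, so that $G\subseteq G'$ and $d_{G'}\ge d_G$ pointwise. Since $x^*\in[x^*,y^*]$ already realises $d_G(x)=|x-x^*|$, and similarly for $y$, the endpoint distances are preserved exactly, giving
\begin{align*}
d_{G'}(x)=d_G(x),\qquad d_{G'}(y)=d_G(y),\qquad d_{G'}(z)\ge d_G(z).
\end{align*}
Thus replacing $E$ by the segment alters only the third distance, and in the unfavourable direction.

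To convert this into the desired inequality I would rewrite the triangle inequality \eqref{equ_hcG}, with $z$ as midpoint, as $g(d_G(z))\ge0$, where
\begin{align*}
g(t)=\bigl(|x-z|\sqrt{d_G(y)}+|y-z|\sqrt{d_G(x)}\bigr)\sqrt{t}+c|x-z||y-z|-|x-y|\,t.
\end{align*}
Because $d_{G'}(x)=d_G(x)$ and $d_{G'}(y)=d_G(y)$, the \emph{same} function $g$ encodes the triangle inequality for $G'$ as $g(d_{G'}(z))\ge0$, and this holds by Conjecture \ref{conj_hruv}. The key observation is that $g$ is concave on $[0,\infty)$ (one checks $g''(t)<0$), with $g(0)=c|x-z||y-z|\ge0$, and $0\le d_G(z)\le d_{G'}(z)$. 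A concave function on an interval dominates the minimum of its two endpoint values, so $g(d_G(z))\ge\min\{g(0),\,g(d_{G'}(z))\}\ge0$, which is exactly \eqref{equ_hcG} for $G$. As the midpoint was arbitrary, every instance of the triangle inequality follows and $h_{G,c}$ is a metric.

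The hard part, and the step that genuinely uses structure, will be reconciling the fact that passing from $E$ to $[x^*,y^*]$ \emph{increases} $d(z)$ while \eqref{equ_hcG} is not monotone in that distance: a naive comparison fails. The resolution is the concavity of $t\mapsto A\sqrt t+B-Ct$, which turns the two-sided bound $0\le d_G(z)\le d_{G'}(z)$ into the required sign. The remaining ingredients — existence and uniqueness of the nearest-point projection onto a closed convex set, and the inclusion $[x^*,y^*]\subseteq E$ — are standard, and I would keep them brief.
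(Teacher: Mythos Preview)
Your argument is correct and carries out precisely the reduction the paper asserts but does not detail: the paper only says ``If Conjecture \ref{conj_hruv} holds, the next result follows from it'' without giving a proof. Your choice of the segment $[x^*,y^*]$ joining the metric projections of $x$ and $y$ onto the closed convex complement pins down $d_{G'}(x)=d_G(x)$ and $d_{G'}(y)=d_G(y)$, and the concavity of $t\mapsto A\sqrt t+B-Ct$ together with $g(0)\ge0$ and $g(d_{G'}(z))\ge0$ (the latter by Conjecture \ref{conj_hruv}) indeed forces $g(d_G(z))\ge0$. This is the substantive step the paper leaves implicit. Two small remarks: when $x^*=y^*$ the ``segment'' degenerates to a point and you should invoke Lemma \ref{lem_c1R} rather than Conjecture \ref{conj_hruv}; and note that the statement as printed has the evident typo $h_{\uhp^n,c}$ for $h_{G,c}$, which you have silently (and correctly) repaired.
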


However, it follows from Lemma \ref{lem_guv} below that the constant $c=2$ is the best constant, for instance, in the case of the unit ball, an $n$-dimensional interval, or a twice-punctured real space.

\begin{lemma}\label{lem_guv}
For a domain $G\subsetneq\R^n$ such that $B^n((u+v)/2,|u-v|/2)\subseteq G$ for some $u,v\in\partial G$, the function $h_{G,c}$ is a metric if and only if $c\geq2$.    
\end{lemma}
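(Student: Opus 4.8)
The plan is to treat the two implications separately. The sufficiency direction, $c \geq 2 \Rightarrow h_{G,c}$ is a metric, is immediate: by the theorem of Dovgoshey, Hariri, and Vuorinen \cite{d16} quoted in the Introduction, $h_{G,c}$ is a metric in \emph{every} domain $G\subsetneq\R^n$ once $c\geq2$, and our $G$ is such a domain. All the substance therefore lies in the necessity, and for this I would construct, for each fixed $c<2$, a triple of points in $G$ that violates the triangle inequality, in the spirit of Lemma \ref{lem_c1g} but now exploiting the inscribed ball $B=B^n((u+v)/2,|u-v|/2)$ to gain two-sided access to the boundary.

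Since the metric property of $h_{G,c}$ is preserved under similarities of $\R^n$ (translations, rotations, and stretchings all act compatibly on $|x-y|$ and $d_G$), I would first normalize so that $B=\B^n$, with center at the origin and $u=-e_1$, $v=e_1$. The key geometric fact to establish is the identity $d_G(t e_1)=1-|t|$ for $-1<t<1$. The lower bound comes from $B\subseteq G$: since $B$ is open and contained in the open set $G$, we have $\partial G\cap B=\emptyset$, so $|w|\geq1$ for every $w\in\partial G$, whence $d_G(te_1)\geq 1-|t|$. The upper bound comes from $u,v\in\partial G$, giving $d_G(te_1)\leq\min\{|te_1-u|,\,|te_1-v|\}=\min\{1+t,\,1-t\}=1-|t|$. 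This two-sided estimate is the only place the hypothesis is used.

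With this in hand I would take the collinear points $x=-te_1$, $z=0$, $y=te_1$ for $0<t<1$, all of which lie in $B\subseteq G$, with $d_G(x)=d_G(y)=1-t$ and $d_G(z)=1$. Then
\begin{align*}
h_{G,c}(x,z)+h_{G,c}(z,y)-h_{G,c}(x,y)
=\log\left(\frac{(1+ct/\sqrt{1-t}\,)^2}{1+2ct/(1-t)}\right),
\end{align*}
and letting $t\to1^-$, the dominant $1/(1-t)$ terms in numerator and denominator give
\begin{align*}
\lim_{t\to1^-}\bigl(h_{G,c}(x,z)+h_{G,c}(z,y)-h_{G,c}(x,y)\bigr)=\log\frac{c}{2}.
\end{align*}
For $c<2$ this limit is strictly negative, so for $t$ sufficiently close to $1$ the triangle inequality fails, proving $h_{G,c}$ is not a metric. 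Combined with Lemma \ref{lem_c1g} (which already rules out small $c$) and the sufficiency direction, $c=2$ is exactly the threshold.

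I expect no serious obstacle here: the only nontrivial step is the identity $d_G(te_1)=1-|t|$, which is a short argument but the genuine crux, since it is precisely where the inscribed-ball hypothesis converts into explicit values of $d_G$; the remaining limit is routine asymptotics, with $1/(1-t)$ dominating $1/\sqrt{1-t}$. One should also verify at the outset that $h_{G,c}$ is indeed similarity-invariant in the stated sense, which follows directly from the defining formula.
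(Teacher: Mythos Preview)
Your proof is correct and takes essentially the same approach as the paper. The paper's counterexample uses $x=u+k(v-u)$, $y=v+k(u-v)$, $z=(u+v)/2$ with $k\to0^+$, which after your normalization is exactly $x=-te_1$, $y=te_1$, $z=0$ with $t=1-2k\to1^-$, and both arrive at the same limit $\log(c/2)$; you additionally spell out the justification $d_G(te_1)=1-|t|$ that the paper leaves implicit.
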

\begin{proof}
By \cite[Thm 1.1, pp. 1464-1465]{d16}, we know that $h_{G,c}$ is a metric if $c\geq2$ and, by Lemma \ref{lem_c1g}, $h_{G,c}$ is not a metric for $c<1$. Suppose then $1\leq c<2$. If $x=u+k(v-u)$, $y=v+k(u-v)$, and $z=(u+v)/2$ with $0<k<1/2$, we have
\begin{align*}
&\lim_{k\to0^+}(h_{\B^n,c}(x,z)+h_{\B^n,c}(z,y)-h_{\B^n,c}(x,y)) 
=\lim_{k\to0^+}\log\left(\frac{(\sqrt{2k}+c(1-2k))^2}{2(k+c(1-2k))}\right)
=\log\left(\frac{c}{2}\right)\\
&<0,
\end{align*}
so $h_{\B^n,c}$ does not satisfy the triangle inequality.
\end{proof}

We use the function ${\rm th}(h_{G,c}(x,y)/2)$ in Lemmas \ref{lem_jhi1} and \ref{lem_phi} and Corollary \ref{cor_hsi}, so let us present the following result related to it.

\begin{lemma}
For a domain $G\subsetneq\R^n$ and all such values of $c$ for which $h_{G,c}(x,y)$ is a metric, the function
\begin{align*}
{\rm th}\frac{h_{G,c}(x,y)}{2}=\frac{|x-y|}{|x-y|+(2/c)\sqrt{d_G(x)d_G(y)}},\quad x,y\in G,    
\end{align*}
is a metric.
\end{lemma}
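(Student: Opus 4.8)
The plan is to first confirm the displayed identity by a routine computation and then to realise the function as $f\circ h_{G,c}$ for a suitable $f$, so that the subadditivity criterion used in the alternate proof of Theorem~\ref{thm_hHc} applies verbatim. Writing $t=h_{G,c}(x,y)$, so that $e^{t}=1+c|x-y|/\sqrt{d_G(x)d_G(y)}$, the elementary identity
\[
{\rm th}\frac{t}{2}=\frac{e^{t}-1}{e^{t}+1}
\]
reduces the claim to algebra: substituting the expression for $e^{t}$ and clearing the common factor $\sqrt{d_G(x)d_G(y)}/c$ yields exactly $|x-y|/(|x-y|+(2/c)\sqrt{d_G(x)d_G(y)})$.

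For the metric property, I would set $f(t)={\rm th}(t/2)$ on $[0,\infty)$. Since $h_{G,c}$ is a metric by hypothesis, it suffices by the criterion recalled in the alternate proof of Theorem~\ref{thm_hHc} to verify three conditions: that $f$ is increasing, that $f^{-1}(0)=\{0\}$, and that $f(t)/t$ is decreasing on $(0,\infty)$. The first two are immediate, since ${\rm th}$ is strictly increasing with ${\rm th}(s)=0$ only at $s=0$, so $f$ is increasing and vanishes precisely at $t=0$.

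The only substantive point is that $f(t)/t={\rm th}(t/2)/t$ is decreasing, and I would deduce this from the concavity of ${\rm th}$ on $[0,\infty)$ rather than by differentiating the quotient directly. Since $({\rm th})'(s)=1-{\rm th}^{2}(s)$ and $({\rm th})''(s)=-2\,{\rm th}(s)(1-{\rm th}^{2}(s))<0$ for $s>0$, the function ${\rm th}$ is concave with ${\rm th}(0)=0$; a concave function through the origin has a decreasing difference quotient, so ${\rm th}(s)/s$ is decreasing, and composing with $s=t/2$ shows ${\rm th}(t/2)/t$ is decreasing as well. Applying the criterion then gives that $f\circ h_{G,c}={\rm th}(h_{G,c}/2)$ is a metric whenever $h_{G,c}$ is. I expect no real obstacle here; the only point requiring care is to route the monotonicity of $f(t)/t$ through the concavity of ${\rm th}$, which sidesteps the messier calculus of differentiating the quotient itself.
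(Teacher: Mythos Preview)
Your proof is correct and follows essentially the same approach as the paper: both apply the subadditivity criterion (from \cite{d98}, as in the alternate proof of Theorem~\ref{thm_hHc}) to $f(t)={\rm th}(t/2)$, verifying that $f$ is increasing, vanishes only at $0$, and that $f(t)/t$ is decreasing. The paper simply asserts the last property, whereas you supply the concavity argument; your added verification of the displayed identity is also routine but not explicitly carried out in the paper.
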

\begin{proof}
The result follows from \cite[Thm 1 \& Prop. 2, p. 9]{d98} because ${\rm th}(u/2)$ is increasing on $u\in[0,\infty)$ with ${\rm th}(0/2)=0$ and ${\rm th}(u/2)/u$ is decreasing on $u\in(0,\infty)$.    
\end{proof}

\section{Inequalities}

\begin{lemma}\label{lem_c01ine}
Let $G\subsetneq\R^n$ and $c_1\geq c_0>0$. Then the inequality
\begin{align*}
\frac{c_0}{c_1}h_{G,c_1}(x,y)\leq h_{G,c_0}(x,y)\leq h_{G,c_1}(x,y),    
\end{align*}
holds for all $x,y\in G$ with the best possible constants in the sense that
\begin{align*}
\inf\left\{\frac{h_{G,c_0}(x,y)}{h_{G,c_1}(x,y)}\text{ : }x,y\in G,\,x\neq y\right\}&=\frac{c_0}{c_1}\quad\text{and}\\
\sup\left\{\frac{h_{G,c_0}(x,y)}{h_{G,c_1}(x,y)}\text{ : }x,y\in G,\,x\neq y\right\}&=1.
\end{align*}
\end{lemma}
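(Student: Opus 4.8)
The plan is to reduce everything to the scalar quantity $t=t(x,y)=|x-y|/\sqrt{d_G(x)d_G(y)}\ge 0$, in terms of which $h_{G,c}(x,y)=\log(1+ct)$. Both inequalities then become statements about the single-variable function $c\mapsto\log(1+ct)$ for fixed $t$. The upper bound $h_{G,c_0}\le h_{G,c_1}$ is immediate: since $c_0\le c_1$ we have $c_0 t\le c_1 t$, and the logarithm is increasing.

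For the lower bound $\tfrac{c_0}{c_1}h_{G,c_1}\le h_{G,c_0}$, I would set $a=c_0 t\le c_1 t=b$ and observe that, after dividing through, the claimed inequality $\tfrac{a}{b}\log(1+b)\le\log(1+a)$ is equivalent to $\tfrac{\log(1+a)}{a}\ge\tfrac{\log(1+b)}{b}$, i.e. to the fact that $u\mapsto\log(1+u)/u$ is decreasing on $(0,\infty)$. This follows by differentiation together with the elementary estimate $\log(1+u)\ge u/(1+u)$ for $u\ge0$, which itself is seen by noting that $\log(1+u)-u/(1+u)$ vanishes at $u=0$ and has nonnegative derivative $u/(1+u)^2$.

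For sharpness I would analyze the ratio $R(t)=\log(1+c_0 t)/\log(1+c_1 t)$, which by the two inequalities already lies in $[c_0/c_1,1]$. A first-order expansion as $t\to0^+$ gives $R(t)\to c_0/c_1$, while $R(t)\to1$ as $t\to\infty$ since both logarithms grow like $\log t$. It then remains to realize both limiting regimes by genuine pairs of points in an arbitrary domain $G\subsetneq\R^n$. For $t\to0^+$, fix any interior point $x_0\in G$ and let $x,y\to x_0$; then $|x-y|\to0$ while $d_G(x),d_G(y)\to d_G(x_0)>0$, so $t\to0^+$ and the infimum $c_0/c_1$ is approached. For $t\to\infty$, use that $\partial G\ne\emptyset$: fix a boundary point $w$ and an interior point $x\ne w$, and let $y\to w$ inside $G$; then $d_G(y)\to0$ while $|x-y|\to|x-w|>0$, forcing $t\to\infty$ and showing the supremum $1$ is approached.

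The only genuinely nonroutine point is verifying that $t$ really does take values arbitrarily close to both $0$ and $\infty$ in \emph{every} domain; this is exactly where I use that $G$ is open and nonempty (for the small-$t$ regime) and that $G\subsetneq\R^n$ has nonempty boundary (for the large-$t$ regime). Everything else reduces to the elementary monotonicity of $\log(1+u)/u$, so I expect this sharpness bookkeeping, rather than the inequalities themselves, to be the main thing requiring care.
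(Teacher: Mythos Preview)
Your argument is correct. The paper proceeds a bit differently: it shows directly that the ratio
\[
R(u)=\frac{\log(1+c_0 u)}{\log(1+c_1 u)}
\]
is increasing on $(0,\infty)$ by applying the monotone form of l'H\^opital's rule (checking that the quotient of derivatives, $\dfrac{c_0(1+c_1 u)}{c_1(1+c_0 u)}$, is increasing), and then reads off the infimum and supremum as the limits of $R$ at $0^+$ and $\infty$. You instead reduce the lower bound to the monotonicity of $u\mapsto \log(1+u)/u$, proved by the elementary estimate $\log(1+u)\ge u/(1+u)$; this avoids the l'H\^opital machinery entirely and is arguably cleaner. Your treatment of sharpness is also more careful than the paper's: the paper simply computes $\lim_{u\to0^+}R(u)=c_0/c_1$ and $\lim_{u\to\infty}R(u)=1$ and declares the result, whereas you explicitly verify that in any domain $G\subsetneq\R^n$ one can realize both $t\to0^+$ (by letting $x,y$ coalesce at an interior point) and $t\to\infty$ (by sending $y$ to a boundary point), which is the right thing to check.
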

\begin{proof}
By differentiation,
\begin{align*}
\frac{\partial}{\partial u}\left(\frac{\dfrac{\partial}{\partial u}\log(1+c_0u)}{\dfrac{\partial}{\partial u}\log(1+c_1u)}\right)=\frac{\partial}{\partial u}\left(\frac{c_0(1+c_1u)}{c_1(1+c_0u)}\right)=\frac{c_0(c_1-c_0)}{c_1(1+c_0u)^2}\geq0,   
\end{align*} 
and $\log(1+c_0\cdot0)=\log(1+c_1\cdot0)=0$ so it follows by the monotone form of l'H\^opital's Rule \cite[Thm B.2, p. 465]{hkv} that $\log(1+c_0u)/\log(1+c_1u)$ is increasing on $(0,\infty)$. Its infimum is therefore $\lim_{u\to0^+}(\log(1+c_0u)/\log(1+c_1u))=c_0/c_1$ and its supremum $\lim_{u\to\infty}(\log(1+c_0u)/\log(1+c_1u))=1$. The result follows.
\end{proof}

The inequality of Lemma \ref{lem_c01ine} can be used to study the triangle inequality of the function $h_{G,c}$ defined with $0<c<2$ since we know that $h_{G,2}$ is a metric for all domains $G\subsetneq\R^n$. 

\begin{corollary}
For all $0<c<2$ and any domain $G\subsetneq\R^n$, the function $h_{G,c}$ fulfills the following relaxed version of the triangle inequality for all $x,y,z\in G$:
\begin{align*}
h_{G,c}(x,y)\leq(2/c)(h_{G,c}(x,z)+h_{G,c}(z,y)).   
\end{align*}
\end{corollary}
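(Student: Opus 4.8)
The plan is to sandwich $h_{G,c}$ between two scalar multiples of $h_{G,2}$ using Lemma \ref{lem_c01ine}, and then to invoke the fact, recorded earlier in the text, that $h_{G,2}$ is a genuine metric on every domain $G\subsetneq\R^n$ and hence satisfies the ordinary triangle inequality. The whole argument is a short deduction from results already in hand; no new estimate is required.

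First I would apply Lemma \ref{lem_c01ine} with $c_0=c$ and $c_1=2$, which is legitimate since $0<c<2$ gives $c_1\geq c_0>0$. The lemma yields two comparisons valid for all points of $G$: the upper bound
\[
h_{G,c}(x,y)\leq h_{G,2}(x,y),
\]
and, after rearranging the lower bound $(c/2)\,h_{G,2}\leq h_{G,c}$,
\[
h_{G,2}(a,b)\leq \frac{2}{c}\,h_{G,c}(a,b)\qquad\text{for all }a,b\in G.
\]

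Next I would chain these together with the triangle inequality for $h_{G,2}$. Since $h_{G,2}$ is a metric, $h_{G,2}(x,y)\leq h_{G,2}(x,z)+h_{G,2}(z,y)$, so combining the displays gives
\[
h_{G,c}(x,y)\leq h_{G,2}(x,y)\leq h_{G,2}(x,z)+h_{G,2}(z,y)\leq \frac{2}{c}\bigl(h_{G,c}(x,z)+h_{G,c}(z,y)\bigr),
\]
which is exactly the claimed relaxed triangle inequality.

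There is essentially no obstacle here beyond bookkeeping: the only points requiring care are orienting the two inequalities of Lemma \ref{lem_c01ine} in the correct direction—the upper bound applied once to the pair $(x,y)$ on the left, the lower bound applied twice to the pairs $(x,z)$ and $(z,y)$ on the right—and ensuring the factor $2/c$ lands outside the sum rather than inside. Thus the argument reduces to a clean concatenation of the two comparisons with the triangle inequality for the metric $h_{G,2}$.
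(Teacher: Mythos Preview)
Your proof is correct and follows essentially the same approach as the paper: apply Lemma \ref{lem_c01ine} with $c_0=c$, $c_1=2$ to get $h_{G,c}\leq h_{G,2}$ and $h_{G,2}\leq(2/c)h_{G,c}$, then sandwich the triangle inequality for the metric $h_{G,2}$ between these two comparisons. The paper's proof is just the one-line chain you wrote at the end.
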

\begin{proof}
By Lemma \ref{lem_c01ine},
\begin{align*}
h_{G,c}(x,y)\leq h_{G,2}(x,y)\leq h_{G,2}(x,z)+h_{G,2}(z,y)\leq(2/c)(h_{G,c}(x,z)+h_{G,c}(z,y)).     
\end{align*}
\end{proof}

\begin{lemma}\label{lem_jhi}
For $c>0$ and all $x,y\in G\subsetneq\R^n$, the inequality
\begin{align*}
(1/2)\min\{c,1\}j_G(x,y)\leq h_{G,c}(x,y)\leq\max\{c,1\}j_G(x,y) \end{align*}
holds.
\end{lemma}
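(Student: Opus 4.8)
The plan is to reduce both inequalities to the single-variable function $\phi(t)=\log(1+t)$ and to exploit its concavity. Assume without loss of generality that $d_G(x)\le d_G(y)$ and write $m=d_G(x)$, $M=d_G(y)$, $s=|x-y|$ and $b=s/m$, so that $j_G(x,y)=\phi(b)$ while $h_{G,c}(x,y)=\phi\bigl(cs/\sqrt{mM}\bigr)=\phi\bigl(cb\sqrt{m/M}\bigr)$. I would use repeatedly that $\phi$ is increasing and that, being concave with $\phi(0)=0$, it satisfies $\phi(\lambda t)\ge\lambda\phi(t)$ for $0\le\lambda\le1$ and $\phi(\lambda t)\le\lambda\phi(t)$ for $\lambda\ge1$.

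The upper bound is the easy half. Since $\sqrt{mM}\ge m$, the argument of $h_{G,c}$ is at most $cb$, so $h_{G,c}(x,y)\le\phi(cb)$. If $c\ge1$ the scaling property gives $\phi(cb)\le c\,\phi(b)=\max\{c,1\}\,j_G(x,y)$, and if $0<c<1$ monotonicity gives directly $\phi(cb)\le\phi(b)=\max\{c,1\}\,j_G(x,y)$.

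The lower bound is the main obstacle, precisely because the geometric mean $\sqrt{mM}$ can be much larger than the minimum $m$ when $M\gg m$; if the ratio $M/m$ were unconstrained, $h_{G,c}$ could be made arbitrarily small compared with $j_G$ and no bound of this form could hold. The resolution is that $d_G$ is $1$-Lipschitz, so $M-m\le s$ and hence $M/m\le 1+b$. This gives $\sqrt{m/M}\ge1/\sqrt{1+b}$, and by monotonicity of $\phi$
$$h_{G,c}(x,y)=\phi\bigl(cb\sqrt{m/M}\bigr)\ge\phi\!\left(\frac{cb}{\sqrt{1+b}}\right),$$
so it suffices to prove $\tfrac12\min\{c,1\}\,\phi(b)\le\phi\bigl(cb/\sqrt{1+b}\bigr)$.

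To finish I would split on $c$. When $c\ge1$ it is enough, by monotonicity, to treat $c=1$, and then $\tfrac12\phi(b)=\log\sqrt{1+b}\le\log\bigl(1+b/\sqrt{1+b}\bigr)$ reduces, after multiplying through by $\sqrt{1+b}$, to the trivial inequality $1\le\sqrt{1+b}$. When $0<c<1$, I would multiply this same inequality $\tfrac12\phi(b)\le\phi\bigl(b/\sqrt{1+b}\bigr)$ by $c$ and combine it with the concavity estimate $c\,\phi\bigl(b/\sqrt{1+b}\bigr)\le\phi\bigl(cb/\sqrt{1+b}\bigr)$, obtaining $\tfrac{c}{2}\phi(b)\le\phi\bigl(cb/\sqrt{1+b}\bigr)$, which is exactly the required bound.
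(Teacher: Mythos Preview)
Your proof is correct. The approach differs from the paper's in presentation rather than in substance, but the difference is worth noting. For the lower bound the paper invokes an external result (Corollary~2.13 of Dovgoshey--Hariri--Vuorinen) to get $\tfrac12 j_G\le h_{G,1}$ and then applies the earlier comparison lemma $\min\{c,1\}h_{G,1}\le h_{G,c}$; you instead unwind the content of that corollary yourself via the $1$-Lipschitz bound $M\le m+s$ and the clean one-variable inequality $\sqrt{1+b}\le 1+b/\sqrt{1+b}$, and then handle the $c$-dependence by the concavity estimate $c\,\phi(t)\le\phi(ct)$ for $0<c<1$. For the upper bound the paper reduces to the ratio $\log(1+cu)/\log(1+u)$ and bounds it by $\max\{c,1\}$ using the monotone form of l'H\^opital's rule, whereas you get the same bound directly from the inequality $\phi(ct)\le c\,\phi(t)$ for $c\ge1$ (equivalently $\phi(t)/t$ decreasing). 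Your argument is more self-contained and avoids both the external citation and l'H\^opital; the paper's version is shorter on the page because it leans on prior results.
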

\begin{proof}
By \cite[Cor. 2.13, p. 1469]{d16} and Lemma \ref{lem_c01ine},
\begin{align*}
(1/2)\min\{c,1\}j_G(x,y)\leq\min\{c,1\}h_{G,1}(x,y)\leq h_{G,c}(x,y).    
\end{align*}
Denote then $d=\min\{d_G(x),d_G(y)\}$ and $u=|x-y|/d$. Since $\max\{d_G(x),d_G(y)\}\geq d$, we have $h_{G,c}(x,y)/j_G(x,y)\leq\log(1+cu)/\log(1+u)$. By applying the monotone form of l'H\^opital's Rule \cite[Thm B.2, p. 465]{hkv} like in the proof of Lemma \ref{lem_c01ine}, we can show the supremum of $\log(1+cu)/\log(1+u)$ is $\max\{c,1\}$.
\end{proof}

\begin{remark}
According to \cite[Lemma 4.4(1), p. 1474]{d16}, for all $c>0$,
\begin{align*}
\frac{c}{2(1+c)}j_G(x,y)\leq h_{G,c}(x,y)\leq cj_G(x,y).  
\end{align*}
However, the upper bound here is incorrect: It does not hold if $0<c<1$ and $d_G(x)=d_G(y)$ because $\log(1+cu)>c\log(1+u)$ for $0<c<1$. Thus, Lemma \ref{lem_jhi} of this paper improves the earlier lower bound of \cite[Lemma 4.4(1), p. 1474]{d16} and offers the correct upper bound.
\end{remark}

\begin{lemma}\label{lem_jhi1}
For all $c>0$ and points $x,y$ in a domain $G\subsetneq\R^n$, the inequality
\begin{align*}
\min\{1,1/c\}{\rm th}\frac{h_{G,c}(x,y)}{2}\leq j^*_G(x,y)\leq\sqrt{1+1/c^2}{\rm th}\frac{h_{G,c}(x,y)}{2} 
\end{align*} 
holds and has the best possible constants in the same sense as in Lemma \ref{lem_c01ine}.
\end{lemma}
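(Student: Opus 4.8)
The statement relates $\th(h_{G,c}(x,y)/2)$ to $j^*_G(x,y)$. The first move is to rewrite both sides in closed form. From the earlier lemma we already have
\[
\th\frac{h_{G,c}(x,y)}{2}=\frac{|x-y|}{|x-y|+(2/c)\sqrt{d_G(x)d_G(y)}},
\]
and by definition
\[
j^*_G(x,y)=\frac{|x-y|}{|x-y|+2\min\{d_G(x),d_G(y)\}}.
\]
Both are of the form $t/(t+w)$, an increasing function of $t/w$, so the whole inequality reduces to comparing the two denominators — equivalently, to bounding the ratio
\[
\frac{(2/c)\sqrt{d_G(x)d_G(y)}}{2\min\{d_G(x),d_G(y)\}}
=\frac{1}{c}\sqrt{\frac{\max\{d_G(x),d_G(y)\}}{\min\{d_G(x),d_G(y)\}}}
\]
from above and below. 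This is the key reduction; everything else is bookkeeping.

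\textbf{Key steps.} First I would abbreviate $d=\min\{d_G(x),d_G(y)\}$, $D=\max\{d_G(x),d_G(y)\}$, and $t=|x-y|$, and set $\lambda=D/d\geq1$, so that $\sqrt{d_G(x)d_G(y)}=d\sqrt\lambda$. Then the two quantities become
\[
j^*_G=\frac{t}{t+2d},\qquad \th\frac{h_{G,c}}{2}=\frac{t}{t+(2/c)\,d\sqrt\lambda},
\]
and the desired chain
\[
\min\{1,1/c\}\,\th\tfrac{h_{G,c}}{2}\le j^*_G\le\sqrt{1+1/c^2}\,\th\tfrac{h_{G,c}}{2}
\]
turns into an elementary comparison. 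For the \emph{upper} bound $j^*_G\le\sqrt{1+1/c^2}\,\th(h_{G,c}/2)$, I would cross-multiply and reduce to showing $(2/c)d\sqrt\lambda\le\sqrt{1+1/c^2}\,(t+2d)-t$ for all admissible $t,d,\lambda$; since the worst case should occur as $t\to0^+$ and with $\lambda$ as large as the geometry permits, I would check that the stated constant $\sqrt{1+1/c^2}$ is exactly what makes this hold and is attained in the limit. For the \emph{lower} bound $\min\{1,1/c\}\,\th(h_{G,c}/2)\le j^*_G$, the same cross-multiplication reduces matters to comparing $\min\{1,1/c\}$ times the $h$-denominator against the $j^*$-denominator, where the two cases $c\le1$ and $c\ge1$ split naturally because $\min\{1,1/c\}$ does.

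\textbf{Sharpness.} To establish that both constants are best possible in the sense of Lemma \ref{lem_c01ine}, I would exhibit limiting configurations in $G$ realizing the infimum and supremum of $j^*_G/\th(h_{G,c}/2)$. Sending $x\to y$ forces $t\to0$, so both ratios tend to their boundary behavior governed purely by the denominators; choosing points where $d_G(x)=d_G(y)$ gives $\lambda=1$ and pins down one extreme constant, while letting $\lambda\to\infty$ (points at very different distances from $\partial G$) pins down the other. I would verify that $G$ admits such configurations — which is automatic for any proper subdomain — and that along these sequences the ratio converges to $1/c$ or to $\sqrt{1+1/c^2}$ as appropriate.

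\textbf{Main obstacle.} The analytical content is light; the real care lies in the sharpness claim. The subtlety is that $\lambda=D/d$ cannot be varied freely while simultaneously sending $t\to0$, since shrinking $|x-y|$ tends to equalize $d_G(x)$ and $d_G(y)$. The hard part will therefore be choosing the extremal sequences correctly — keeping the boundary-distance ratio bounded away from $1$ (or driving it to infinity) while $|x-y|\to0$ — so that the claimed constants are genuinely attained in the limit rather than merely serving as bounds. I expect this to require a concrete construction in a model domain such as $\uhp^n$ or $\R^n\setminus\{0\}$, where the geometry lets $d_G(x)/d_G(y)$ be controlled independently of $|x-y|$.
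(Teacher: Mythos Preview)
Your reduction to comparing the two denominators is exactly the paper's starting point, and the lower bound goes through as you indicate. The gap is in the upper bound. You treat $\lambda=D/d$ as a parameter to be pushed ``as large as the geometry permits'' and guess that the worst case for
\[
\frac{j^*_G}{\th(h_{G,c}/2)}=\frac{t+(2/c)\,d\sqrt{\lambda}}{t+2d}
\]
occurs as $t\to0^+$. But the constraint the geometry imposes is precisely $|d_G(x)-d_G(y)|\le|x-y|$, i.e.\ $D\le d+t$, so $\lambda\le1+t/d$. Sending $t\to0$ then forces $\lambda\to1$, and the ratio tends to $1/c$, not to $\sqrt{1+1/c^2}$; your proposed limiting check would only yield the weaker requirement $K\ge1/c$ and would not establish the stated constant. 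Likewise, letting $\lambda\to\infty$ forces $t/d\to\infty$ and the ratio tends to $1$, again missing the target.

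What the paper actually does is impose $D\le d+t$ to get
\[
\frac{j^*_G}{\th(h_{G,c}/2)}\le\frac{c\,u+2\sqrt{u+1}}{c(u+2)},\qquad u=\frac{t}{d},
\]
and then maximize this one-variable expression by calculus; the maximum is attained at the interior point $u=2c(c+\sqrt{c^2+1})$ and equals $\sqrt{1+1/c^2}$. Sharpness is realized not by any limit, but by choosing $d_G(y)=d_G(x)+|x-y|$ with this specific ratio $|x-y|/d_G(x)$. So the missing ingredients in your plan are (i) explicitly invoking the Lipschitz bound $D\le d+t$, and (ii) recognizing that the extremum is interior and carrying out the optimization, rather than checking endpoints.
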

\begin{proof}
Denote $d=\min\{d_G(x),d_G(y)\}$. We have
\begin{align*}
\frac{j^*_G(x,y)}{{\rm th}(h_{G,c}(x,y)/2)}
=\frac{|x-y|+(2/c)\sqrt{d_G(x)d_G(y)}}{|x-y|+2d}
\geq\frac{|x-y|+(2/c)\,d}{|x-y|+2d}
\geq\min\{1,1/c\},
\end{align*}
and this lower bound is the limit value of the quotient $j^*_G(x,y)/({\rm th}(h_{G,c}(x,y)/2))$ when $d_G(x)=d_G(y)$ and either $|x-y|\to0^+$ or $|x-y|\to\infty$. It also holds that
\begin{align*}
\frac{j^*_G(x,y)}{{\rm th}(h_{G,c}(x,y)/2)}
\leq\frac{|x-y|+(2/c)\sqrt{d(|x-y|+d)}}{|x-y|+2d}
=\frac{c|x-y|/d+2\sqrt{|x-y|/d+1}}{c(|x-y|/d+2)}.
\end{align*}
By differentiation,
\begin{align*}
&\frac{\partial}{\partial u}\left(\frac{cu+2\sqrt{u+1}}{c(u+2)}\right)=\frac{2c\sqrt{u+1}-u}{c\sqrt{u+1}(u+2)^2}\geq0
\quad\Leftrightarrow\quad
u^2-4c^2u-4c^2\leq0\\
&\Leftrightarrow\quad
u\leq 2c(c+\sqrt{c^2+1}),
\end{align*}
where $u>0$. Consequently, the maximum of $(cu+2\sqrt{u+1})/(c(u+2))$ is obtained at $u=2c(c+\sqrt{c^2+1})$. By inserting this value of $u$, we have
\begin{align*}
\frac{c(2c(c+\sqrt{c^2+1}))+2\sqrt{2c(c+\sqrt{c^2+1})+1}}{c(2c(c+\sqrt{c^2+1})+2)}
=\sqrt{1+1/c^2}.
\end{align*}
The quotient $j^*_G(x,y)/({\rm th}(h_{G,c}(x,y)/2))$ obtains this value if $d_G(y)=d_G(x)+|x-y|$ and $|x-y|=2c(c+\sqrt{c^2+1})d_G(x)$.    
\end{proof}

\begin{lemma}\label{lem_phi}
For all $c>0$ and $x,y$ in any domain $G\subsetneq\R^n$, the inequality
\begin{align*}
\min\{1,1/c\}\,{\rm th}\frac{h_{G,c}(x,y)}{2}\leq p_G(x,y)\leq\sqrt{1+1/c^2}\,{\rm th}\frac{h_{G,c}(x,y)}{2}    
\end{align*}
holds and has the best possible constants in the same sense as in Lemma \ref{lem_c01ine}.
\end{lemma}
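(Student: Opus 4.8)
The plan is to mimic the proof of Lemma \ref{lem_jhi1}, but here the argument is cleaner because both $p_G$ and $h_{G,c}$ are built from the \emph{geometric} mean $\sqrt{d_G(x)d_G(y)}$, so the relevant quotient collapses \emph{exactly} to a function of a single variable rather than merely being bounded by one. First I would write
\begin{align*}
\frac{p_G(x,y)}{{\rm th}(h_{G,c}(x,y)/2)}
=\frac{|x-y|+(2/c)\sqrt{d_G(x)d_G(y)}}{\sqrt{|x-y|^2+4d_G(x)d_G(y)}},
\end{align*}
using the formulas for $p_G$ and for ${\rm th}(h_{G,c}/2)$. Dividing numerator and denominator by $\sqrt{d_G(x)d_G(y)}$ and setting $w=|x-y|/\sqrt{d_G(x)d_G(y)}\in(0,\infty)$, this quotient becomes $\varphi(w)=(w+2/c)/\sqrt{w^2+4}$, with no inequality lost at this step, in contrast with Lemma \ref{lem_jhi1}.

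Next I would study $\varphi$ on $(0,\infty)$ by differentiation, obtaining
\begin{align*}
\varphi'(w)=\frac{4-(2/c)w}{(w^2+4)^{3/2}},
\end{align*}
so that $\varphi$ increases on $(0,2c)$, decreases on $(2c,\infty)$, and attains a unique maximum at $w=2c$. Substituting this value gives
\begin{align*}
\varphi(2c)=\frac{2c+2/c}{2\sqrt{c^2+1}}=\frac{c+1/c}{\sqrt{c^2+1}}=\sqrt{1+1/c^2},
\end{align*}
which is the claimed upper constant. For the lower constant I would invoke the endpoint limits $\lim_{w\to0^+}\varphi(w)=1/c$ and $\lim_{w\to\infty}\varphi(w)=1$; since $\varphi$ first rises and then falls, its infimum over $(0,\infty)$ is the smaller endpoint value, namely $\min\{1,1/c\}$. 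Thus $\min\{1,1/c\}\le\varphi(w)\le\sqrt{1+1/c^2}$ for every $w>0$, which is precisely the asserted two-sided inequality.

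Finally I would settle sharpness in the sense of Lemma \ref{lem_c01ine}. The supremum $\sqrt{1+1/c^2}$ is attained at $w=2c$, that is, whenever $|x-y|=2c\sqrt{d_G(x)d_G(y)}$, while the infimum $\min\{1,1/c\}$ is approached by letting $w\to0^+$ (points with $d_G(x)=d_G(y)$ and $|x-y|\to0^+$) when $c\ge1$, and by letting $w\to\infty$ (the same with $|x-y|\to\infty$) when $c\le1$. I expect the only delicate point to be this last step: one must exhibit admissible point configurations realizing the required values of $w$ in the domain under consideration, exactly as is done at the close of the proof of Lemma \ref{lem_jhi1}. Everything else reduces to the single-variable calculus already indicated above.
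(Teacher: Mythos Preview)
Your proof is correct and essentially the same as the paper's: both compute the quotient $p_G/{\rm th}(h_{G,c}/2)$ exactly as a function of the single variable $|x-y|/\sqrt{d_G(x)d_G(y)}$ (the paper uses $u=w/2$), differentiate to locate the maximum at $w=2c$ (equivalently $u=c$), and read off the two endpoint limits $1/c$ and $1$ for the infimum. Your closing caveat about realizing the relevant values of $w$ is well taken; note that in any domain $G$ one gets $w\to0^+$ by letting $y\to x$ and $w\to\infty$ by sending $y$ toward $\partial G$, so the intermediate value $w=2c$ is also attained and all three constants are sharp in every $G$.
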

\begin{proof}
We can write
\begin{align*}
\frac{p_G(x,y)}{{\rm th}(h_{G,c}(x,y)/2)}
=\frac{|x-y|+(2/c)\sqrt{d_G(x)d_G(y)}}{\sqrt{|x-y|^2+4d_G(x)d_G(y)}}
=\frac{c|x-y|/(2\sqrt{d_G(x)d_G(y)})+1}{c\sqrt{|x-y|^2/(4d_G(x)d_G(y))+1}}.
\end{align*}
By differentiation,
\begin{align*}
\frac{\partial}{\partial u}\left(\frac{cu+1}{c\sqrt{u^2+1}}\right)=\frac{c-u}{c(u^2+1)^{3/2}},    
\end{align*}
so $(cu+1)/(c\sqrt{u^2+1})$ has a maximum $\sqrt{1+1/c^2}$ at $u=c$. The infimum of this quotient is either $\lim_{u\to0^+}(cu+1)/(c\sqrt{u^2+1})=1/c$ or $\lim_{u\to\infty}(cu+1)/(c\sqrt{u^2+1})=1$. 
\end{proof}

\begin{corollary}\label{cor_hsi}
For all $c>0$ and $x,y$ in any domain $G\subsetneq\R^n$, the inequality
\begin{align*}
\min\{1,1/c\}\,{\rm th}\frac{h_{G,c}(x,y)}{2}\leq s_G(x,y)\leq\sqrt{2+2/c^2}\,{\rm th}\frac{h_{G,c}(x,y)}{2}    
\end{align*}
holds and, if $G$ is convex, the constant $\sqrt{2+2/c^2}$ here can be replaced by $\sqrt{1+1/c^2}$.   
\end{corollary}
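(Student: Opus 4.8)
The plan is to derive the three estimates of the corollary from the already established bounds for $p_G$ and $j^*_G$ in Lemmas \ref{lem_phi} and \ref{lem_jhi1}, by feeding in the elementary comparison inequalities between the triangular ratio metric $s_G$ and these two functions. Concretely, I would use
\[
j^*_G(x,y)\le s_G(x,y)\le\sqrt{2}\,p_G(x,y)
\]
for an arbitrary domain, together with the sharper bound $s_G(x,y)\le p_G(x,y)$ that holds when $G$ is convex. All three comparisons are standard (cf. \cite{chkv,hvz}), but I would include short self-contained proofs since they pin down the constants.

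For the left-hand inequality I would take $z\in\partial G$ to be a nearest boundary point of whichever of $x,y$ is closer to $\partial G$; the triangle inequality then gives $\inf_{z\in\partial G}(|x-z|+|z-y|)\le|x-y|+2\min\{d_G(x),d_G(y)\}$, which is precisely $j^*_G(x,y)\le s_G(x,y)$. Combined with the lower bound $\min\{1,1/c\}\,{\rm th}(h_{G,c}(x,y)/2)\le j^*_G(x,y)$ of Lemma \ref{lem_jhi1}, this yields the asserted left-hand estimate with constant $\min\{1,1/c\}$; note that routing the lower bound through $p_G$ would only give the weaker constant $\min\{1,1/c\}/\sqrt2$, so the detour through $j^*_G$ is essential.

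For the general upper bound I would use, for each fixed $z\in\partial G$, the two inequalities $|x-z|+|z-y|\ge d_G(x)+d_G(y)$ and $|x-z|+|z-y|\ge|x-y|$; squaring and adding them and applying $(d_G(x)+d_G(y))^2\ge4d_G(x)d_G(y)$ gives $2(|x-z|+|z-y|)^2\ge|x-y|^2+4d_G(x)d_G(y)$, so after taking the infimum $s_G(x,y)\le\sqrt2\,p_G(x,y)$. The upper bound $p_G(x,y)\le\sqrt{1+1/c^2}\,{\rm th}(h_{G,c}(x,y)/2)$ of Lemma \ref{lem_phi} then produces $s_G\le\sqrt2\sqrt{1+1/c^2}\,{\rm th}(h_{G,c}/2)=\sqrt{2+2/c^2}\,{\rm th}(h_{G,c}/2)$.

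The main obstacle is the convex refinement $s_G\le p_G$, the only place where convexity is used. For a fixed $z\in\partial G$ I would choose an inner unit normal $\nu$ of a supporting hyperplane of $G$ at $z$; convexity gives $\langle x-z,\nu\rangle\ge d_G(x)$ and $\langle y-z,\nu\rangle\ge d_G(y)$ for $x,y\in G$. Writing $\alpha,\beta\in[0,\pi/2)$ for the angles of $x-z,y-z$ with $\nu$ and $\gamma$ for the angle between $x-z$ and $y-z$, the spherical triangle inequality gives $\gamma\le\alpha+\beta<\pi$ and hence $\cos\gamma\ge\cos(\alpha+\beta)$. Using $d_G(x)\le|x-z|\cos\alpha$ and $d_G(y)\le|z-y|\cos\beta$, the desired estimate $(|x-z|+|z-y|)^2\ge|x-y|^2+4d_G(x)d_G(y)$ reduces to $1+\cos\gamma\ge2\cos\alpha\cos\beta$, which in turn follows from $1-\cos(\alpha-\beta)\ge0$. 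Passing to the infimum over $z$ gives $s_G\le p_G$, and Lemma \ref{lem_phi} then upgrades the general constant to $\sqrt{1+1/c^2}$ in the convex case, completing the argument.
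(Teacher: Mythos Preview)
Your approach is essentially identical to the paper's: both derive the lower bound from $j^*_G\le s_G$ together with Lemma~\ref{lem_jhi1}, the general upper bound from $s_G\le\sqrt{2}\,p_G$ together with Lemma~\ref{lem_phi}, and the convex refinement from $s_G\le p_G$ with Lemma~\ref{lem_phi}. The only difference is that the paper simply cites \cite[Lemma 2.1]{hvz}, \cite[Thm 3.6]{sqm}, and \cite[Lemma 11.6(1)]{hkv} for these three comparison inequalities, whereas you supply short self-contained proofs of them; your added arguments are all correct.
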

\begin{proof}
For any domain $G\subsetneq\R^n$, $j^*_G(x,y)\leq s_G(x,y)$ by \cite[Lemma 2.1, p. 1124]{hvz} and $s_G(x,y)\leq\sqrt{2}p_G(x,y)$ by \cite[Thm 3.6, p. 274]{sqm} and, if $G$ is convex, $s_G(x,y)\leq p_G(x,y)$ by \cite[Lemma 11.6(1), p. 197]{hkv}, so the result follows from Lemmas \ref{lem_jhi1} and \ref{lem_phi}.    
\end{proof}

In \cite{d16}, the inequality between the hyperbolic metric $\rho_{\uhp^n}$ and the function $h_{\uhp^n,c}$ was only studied in the case $c\geq2$ but, since we know $h_{\uhp^n,c}$ is a metric for all $c\geq1$, the following result might be useful.

\begin{lemma}
For all $c\geq1$ and $x,y\in\uhp^n$, the inequality
\begin{align*}
(1/c)h_{\uhp^n,c}(x,y)\leq \rho_{\uhp^n}(x,y)\leq 2h_{\uhp^n,c}(x,y)   
\end{align*}
holds and has the best possible constants in the same sense as in Lemma \ref{lem_c01ine}.
\end{lemma}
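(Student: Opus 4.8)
The plan is to reduce the two-sided estimate to a single-variable inequality by means of the identity of Proposition~\ref{prop_idh}. Writing $t=\rho_{\uhp^n}(x,y)$, which ranges over all of $(0,\infty)$ as $x,y$ vary over $\uhp^n$, that identity gives $h_{\uhp^n,c}(x,y)=\log(1+2c\,{\rm sh}(t/2))$. The claimed chain $(1/c)h_{\uhp^n,c}(x,y)\le\rho_{\uhp^n}(x,y)\le 2h_{\uhp^n,c}(x,y)$ is then equivalent to saying that
\begin{align*}
q(t)=\frac{t}{\log(1+2c\,{\rm sh}(t/2))}
\end{align*}
satisfies $1/c\le q(t)\le 2$ for every $t>0$, while the best-constant assertion (in the sense of Lemma~\ref{lem_c01ine}) amounts to showing $\inf_t q(t)=1/c$ and $\sup_t q(t)=2$.

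For the lower bound $\rho_{\uhp^n}\le 2h_{\uhp^n,c}$, that is $q(t)\le 2$, I would exponentiate: it is equivalent to $e^{t/2}\le 1+2c\,{\rm sh}(t/2)$. Using $e^{t/2}={\rm ch}(t/2)+{\rm sh}(t/2)$ this reads $1+(2c-1)\,{\rm sh}(t/2)-{\rm ch}(t/2)\ge0$, and since $c\ge1$ forces $(2c-1)\,{\rm sh}(t/2)\ge {\rm sh}(t/2)$, the left-hand side is at least $1+{\rm sh}(t/2)-{\rm ch}(t/2)=1-e^{-t/2}\ge0$. This step is clean and uses the hypothesis only through $2c-1\ge1$.

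The upper bound $(1/c)h_{\uhp^n,c}\le\rho_{\uhp^n}$, equivalently $q(t)\ge 1/c$ or $\log(1+2c\,{\rm sh}(t/2))\le ct$, is the step I expect to require the most care and is where $c\ge1$ is genuinely needed. I would set $\psi(t)=ct-\log(1+2c\,{\rm sh}(t/2))$, note $\psi(0)=0$, and compute
\begin{align*}
\psi'(t)=\frac{c\,(1+2c\,{\rm sh}(t/2)-{\rm ch}(t/2))}{1+2c\,{\rm sh}(t/2)},
\end{align*}
so that the sign of $\psi'$ is governed by $\phi(t):=1+2c\,{\rm sh}(t/2)-{\rm ch}(t/2)$. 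Since $\phi(0)=0$ and $\phi'(t)=c\,{\rm ch}(t/2)-\tfrac12{\rm sh}(t/2)\ge {\rm ch}(t/2)-\tfrac12{\rm sh}(t/2)>0$ for $c\ge1$ and $t>0$, we get $\phi\ge0$, hence $\psi'\ge0$ and $\psi\ge\psi(0)=0$, which is the desired inequality.

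Finally, for the best constants I would evaluate the two limits of $q$. As $t\to0^+$, ${\rm sh}(t/2)\sim t/2$ gives $\log(1+2c\,{\rm sh}(t/2))\sim ct$, whence $q(t)\to 1/c$; as $t\to\infty$, $2c\,{\rm sh}(t/2)\sim c\,e^{t/2}$ gives $\log(1+2c\,{\rm sh}(t/2))\sim t/2+\log c$, whence $q(t)\to 2$. Together with the bounds $1/c\le q(t)\le 2$ and the continuity of $q$, this yields $\inf_t q(t)=1/c$, approached as $t\to0^+$, and $\sup_t q(t)=2$, approached as $t\to\infty$, with neither extreme attained, exactly matching the best-constant formulation of Lemma~\ref{lem_c01ine}.
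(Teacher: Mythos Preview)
Your argument is correct. Both bounds and both limits are verified cleanly, and the reduction to the single variable $t=\rho_{\uhp^n}(x,y)$ via Proposition~\ref{prop_idh} is the right starting point.

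Your route differs from the paper's in the way each inequality is established. For $(1/c)h_{\uhp^n,c}\le\rho_{\uhp^n}$, the paper factors through $c=1$: it first invokes Lemma~\ref{lem_c01ine} to get $(1/c)h_{\uhp^n,c}\le h_{\uhp^n,1}$ and then checks the elementary equivalence $\log(1+2u)\le 2\log(u+\sqrt{u^2+1})$ with $u=|x-y|/(2\sqrt{x_ny_n})$. You instead handle general $c\ge1$ in one stroke by monotonicity of $\psi(t)=ct-\log(1+2c\,{\rm sh}(t/2))$. For $\rho_{\uhp^n}\le 2h_{\uhp^n,c}$, the paper quotes the ready-made inequality ${\rm arch}\,t\le 2\log(1+\sqrt{2(t-1)})$ from \cite[(2.14)]{vuo} (and again reduces to $c=1$), whereas your exponentiation and the identity $e^{t/2}={\rm ch}(t/2)+{\rm sh}(t/2)$ give a self-contained three-line proof. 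The paper's version highlights that the whole lemma is essentially the case $c=1$ together with the monotonicity of $h_{G,c}$ in $c$; your version is more elementary and avoids external references, at the cost of obscuring that reduction. The computation of the limiting values of the quotient is the same in both.
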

\begin{proof}
We have $(1/c)h_{\uhp^n,c}(x,y)\leq h_{\uhp^n,1}(x,y)$ for all $c\geq1$ by Lemma \ref{lem_c01ine} and, by writing $u=|x-y|/(2\sqrt{x_ny_n})$ and using the formula \eqref{for_hypH}, we have
\begin{align*}
h_{\uhp^n,1}(x,y)\leq\rho_{\uhp^n}(x,y)
\quad\Leftrightarrow\quad
\log(1+2u)\leq2\log(u+\sqrt{u^2+1})
\quad\Leftrightarrow\quad
u\geq0.
\end{align*}
The proof of the second part is the same as in the proof of \cite[Thm 4.6, p. 1475]{d16}: By \cite[(2.14), p. 23]{vuo}, ${\rm arch}\,t\leq2\log(1+\sqrt{2(t-1)})$, so for all $c\geq1$,
\begin{align*}
\rho_{\uhp^n}(x,y)\leq2\log\left(1+\sqrt{2({\rm ch}(\rho_{\uhp^n}(x,y))-1)}\right)
=2\log\left(1+\frac{|x-y|}{\sqrt{x_ny_n}}\right)\leq 2h_{\uhp^n,c}(x,y).
\end{align*}
Since
\begin{align*}
\lim_{u\to0^+}\frac{2\log(u+\sqrt{u^2+1})}{\log(1+2cu)}=\frac{1}{c},\quad
\lim_{u\to\infty}\frac{2\log(u+\sqrt{u^2+1})}{\log(1+2cu)}=2, 
\end{align*}
we have the best constants here.
\end{proof}

\section{Ball inclusion}

\begin{lemma}\label{lem_huinc}
For any choices of $G\subsetneq\R^n$, $x\in G$, $0<r<d_G(x)$, and $c>0$, the ball inclusion $B_h(x,r_0)\subseteq B^n(x,r)\subseteq B_h(x,r_1)$ holds if $r_0\leq\log\left(1+\dfrac{cr}{\sqrt{d_G(x)(d_G(x)+r)}}\right)$ and $r_1\geq\log\left(1+\dfrac{cr}{\sqrt{d_G(x)(d_G(x)-r)}}\right)$. The lower limit of $r_1$ is the best possible one here for all choices of $G\subsetneq\R^n$.  
\end{lemma}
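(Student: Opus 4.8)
The plan is to reduce both inclusions to the monotonicity of a single one-variable function, exploiting the fact that $d_G$ is $1$-Lipschitz. Writing $a=d_G(x)$ and $t=|x-y|$, the Lipschitz estimate $|d_G(x)-d_G(y)|\le|x-y|$ gives the two-sided bound $a-t\le d_G(y)\le a+t$, where the left side is positive whenever $t<r<a$. Since $h_{G,c}(x,y)=\log(1+ct/\sqrt{a\,d_G(y)})$ is increasing in $t$ and decreasing in $d_G(y)$, these bounds let me replace the unknown $d_G(y)$ by $a\pm t$ and pass to explicit scalar functions whose behaviour is settled by an elementary derivative.

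For the inclusion $B^n(x,r)\subseteq B_h(x,r_1)$, I would fix $y$ with $t<r$ and use $d_G(y)\ge a-t$ to obtain
\begin{align*}
h_{G,c}(x,y)\le\log\!\left(1+\frac{ct}{\sqrt{a(a-t)}}\right).
\end{align*}
A short computation shows $t\mapsto t/\sqrt{a-t}$ is strictly increasing on $(0,a)$, so the right-hand side is strictly increasing in $t$ and hence strictly below its value at $t=r$, which is exactly the threshold for $r_1$ in the statement; as $r_1$ is taken at least this threshold, $h_{G,c}(x,y)<r_1$ follows. Symmetrically, for $B_h(x,r_0)\subseteq B^n(x,r)$ I would argue by contrapositive: for any $y$ with $t\ge r$ the bound $d_G(y)\le a+t$ gives $h_{G,c}(x,y)\ge\log(1+ct/\sqrt{a(a+t)})$, and since $t\mapsto t/\sqrt{a+t}$ is increasing this is at least its value at $t=r$, i.e.\ at least the threshold for $r_0$; as $r_0$ is taken below that threshold, no point of $B_h(x,r_0)$ can lie outside $B^n(x,r)$.

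The sharpness of $r_1$ requires a genuinely geometric input rather than a purely algebraic one, and I expect it to be the main point to get right. The key observation is that the extremal configuration $d_G(y)=a-t$ is attainable in \emph{every} domain: choosing a nearest boundary point $q\in\partial G$ with $|x-q|=a$ (which exists, as the distance to the nonempty closed set $\partial G$ is attained) and taking $y$ on the segment $[x,q]$ forces $d_G(y)\le|y-q|=a-t$, while the Lipschitz bound forces $d_G(y)\ge a-t$, so equality holds. Letting $t\to r^-$ along this segment, $h_{G,c}(x,y)$ increases to $\log(1+cr/\sqrt{a(a-r)})$, so the inclusion $B^n(x,r)\subseteq B_h(x,r_1')$ fails for every $r_1'$ strictly below the stated threshold. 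This simultaneously shows the given $r_1$ is optimal and explains why the optimality holds uniformly over all $G\subsetneq\R^n$.
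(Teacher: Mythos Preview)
Your proof is correct and follows essentially the same approach as the paper: both rely on the $1$-Lipschitz bound $|d_G(x)-d_G(y)|\le|x-y|$ to squeeze $d_G(y)$ between $d_G(x)\pm|x-y|$, and both use a nearest boundary point $q\in\partial G$ (the paper writes $u\in S^{n-1}(x,d_G(x))\cap\partial G$) to realize the extremal configuration $d_G(y)=d_G(x)-|x-y|$ for the sharpness of $r_1$. The only cosmetic difference is that the paper restricts attention to $y$ on the sphere $S^{n-1}(x,r)$ and leaves the passage from the sphere to the open ball implicit, whereas you work directly with all $t=|x-y|$ and make the monotonicity of $t\mapsto t/\sqrt{a\pm t}$ explicit; this makes your write-up slightly more self-contained but does not change the underlying argument.
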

\begin{proof}
Fix $x\in G$, $0<r<d_G(x)$, $c>0$, and $u\in S^{n-1}(x,d_G(x))\cap\partial G$. For all $y\in S^{n-1}(x,r)$, the value of $h_{G,c}(x,y)$ depends only on $d_G(y)$ since both $d_G(x)$ and $|x-y|=r$ are fixed. The distance $d_G(y)$ obtains its minimum value $d_G(x)-r$ when $y=x+r(u-x)/d_G(x)$, and its maximum has an upper bound $d_G(x)+r$. Thus, the result follows. \end{proof}

The upper limit of $r_0$ in Lemma \ref{lem_huinc} is the best possible one when the domain $G$ is for instance $\R^n\setminus\{0\}$. Namely, in $G=\R^n\setminus\{0\}$, there is only one possible choice of $u\in S^{n-1}(x,d_G(x))\cap\partial G$ for $x\in G$ and the upper bound $d_G(x)+r$ of $d_G(y)$ is obtained when $y=x+r(u-x)/d_G(x)$. However, there is a better inclusion result in the case $G=\B^n$.

\begin{lemma}
For any $x\in\B^n$, $0<r<1-|x|$, and $c>0$, the ball inclusion $B_h(x,r_0)\subseteq B^n(x,r)\subseteq B_h(x,r_1)$ holds if and only if 
\begin{align*}
&r_0\leq\log\left(1+\frac{cr}{\sqrt{(1-|x|)(1-||x|-r|)}}\right)\text{ and }\,
r_1\geq\log\left(1+\frac{cr}{\sqrt{(1-|x|)(1-|x|-r)}}\right).    
\end{align*} 
\end{lemma}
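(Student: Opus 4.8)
The plan is to reduce everything to one--variable monotonicity. Write $a=|x|$, so that $d_{\B^n}(x)=1-a$ and $d_{\B^n}(y)=1-|y|$ for every $y\in\B^n$; hence
\[
h_{\B^n,c}(x,y)=\log\left(1+\frac{c\,|x-y|}{\sqrt{(1-a)(1-|y|)}}\right)
\]
depends on $y$ only through $\rho:=|x-y|$ and $|y|$. Since $h_{\B^n,c}$ is invariant under the orthogonal maps fixing $\B^n$, I may assume $x=ae_1$. The one geometric input I need is that, for a point $y$ with $|x-y|=\rho$, the value $|y|$ sweeps out exactly the interval $[\,|a-\rho|,\,a+\rho\,]$, the endpoints being attained at the two points collinear with $0$ and $x$; equivalently $d_{\B^n}(y)\in[\,1-a-\rho,\,1-|a-\rho|\,]$. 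Because $h_{\B^n,c}(x,y)$ is increasing in $\rho$ and decreasing in $d_{\B^n}(y)$, I can read off the extreme values of $h_{\B^n,c}$ on Euclidean spheres and balls directly from these endpoints.

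For the inclusion $B^n(x,r)\subseteq B_h(x,r_1)$ I would argue as follows. For $y$ with $|x-y|<r$ the quantity $h_{\B^n,c}(x,y)$ is largest when $\rho$ is as large and $d_{\B^n}(y)$ as small as possible, i.e.\ in the limit $\rho\to r$, $|y|\to a+r$; the corresponding supremum is $\log(1+cr/\sqrt{(1-a)(1-a-r)})$. Monotonicity of $\rho\mapsto\log(1+c\rho/\sqrt{(1-a)(1-a-\rho)})$ is immediate, since its numerator increases and its denominator decreases, so this supremum is genuine but not attained on the open ball; it is approached by interior points along the ray through $0$ and $x$. Hence the inclusion holds precisely when $r_1$ is at least this value, which is the stated $r_1$--bound.

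The substantive part is the inclusion $B_h(x,r_0)\subseteq B^n(x,r)$, which by taking contrapositives is equivalent to $\inf\{h_{\B^n,c}(x,y):|x-y|\ge r\}\ge r_0$. For a fixed $\rho$ the infimum over admissible $y$ is attained at the smallest value $|y|=|a-\rho|$, so I am reduced to minimising
\[
\phi(\rho):=\log\left(1+\frac{c\rho}{\sqrt{(1-a)(1-|a-\rho|)}}\right)
\]
over $\rho\ge r$. The main obstacle is to show that $\phi$ is increasing, so that its minimum sits at $\rho=r$ and equals $\log(1+cr/\sqrt{(1-a)(1-|a-r|)})$, the claimed $r_0$--bound. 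I would verify this by differentiating $g(\rho):=\rho/\sqrt{1-|a-\rho|}$ separately on $\rho<a$ (where $1-|a-\rho|=1-a+\rho$) and on $\rho>a$ (where $1-|a-\rho|=1+a-\rho$), checking $g'>0$ on each piece and that $g$ is continuous with positive one--sided derivatives at the kink $\rho=a$; this is exactly where the two regimes $r\le|x|$ and $r>|x|$, responsible for the $||x|-r|$ appearing in the statement, must be handled together.

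Once $\phi$ is known to be increasing, the forward inclusion follows, and sharpness of the $r_0$--bound is witnessed by the sphere point $y=(1-r/a)x$, which satisfies $|x-y|=r$ and $|y|=|a-r|$, so that $h_{\B^n,c}(x,y)$ equals the bound exactly: any $r_0$ exceeding it would place this point in $B_h(x,r_0)\setminus B^n(x,r)$. Likewise the point $y=(1+r/a)x$ confirms that the $r_1$--bound cannot be lowered. Combining the two halves yields the asserted ``if and only if''.
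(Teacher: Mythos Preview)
Your argument is correct and follows essentially the same route as the paper: reduce $h_{\B^n,c}(x,y)$ on the Euclidean sphere $|x-y|=\rho$ to the range of $d_{\B^n}(y)=1-|y|\in[\,1-a-\rho,\,1-|a-\rho|\,]$, and read off the extremal values. The paper's proof is a single sentence referring back to the preceding general lemma and noting only that in $\B^n$ the maximum of $d_G(y)$ is $1-||x|-r|$; your write--up is more thorough in that you verify explicitly the monotonicity of $\phi(\rho)=\log\bigl(1+c\rho/\sqrt{(1-a)(1-|a-\rho|)}\bigr)$ on $\rho\ge r$, which is needed for the $r_0$--inclusion but is left implicit in the paper. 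One cosmetic point: your sharpness witnesses $y=(1\pm r/a)x$ presuppose $x\neq0$; the case $x=0$ is trivial by rotational symmetry, so you may want to note this separately.
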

\begin{proof}
The result follows similarly as Lemma \ref{lem_huinc}, but the maximum of $d_G(y)=1-|y|$ is $1-||x|-r|$. 
\end{proof}

\begin{theorem}
For any choices of $x\in G=\uhp^n$, $r>0$, and $c>0$, we have
\begin{align*}
B_h(x,r)
&=B_\rho\left(x,2{\rm arch}\left(\frac{e^r-1}{2c}\right)\right)\\
&=B^n\left(x+\frac{x_ne_n(e^r-1)^2}{2c^2},\frac{x_n(e^r-1)}{2c^2}\sqrt{(e^r-1)^2+4c^2}\right).
\end{align*}
\end{theorem}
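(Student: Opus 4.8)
The plan is to establish the two claimed equalities separately, in each case by reducing the defining inequality $h_{\uhp^n,c}(x,y)<r$ to a tractable form: the first equality exploits that $h_{\uhp^n,c}$ is a strictly increasing function of the hyperbolic distance (so $h$-balls and $\rho$-balls centred at the same point coincide), while the second rewrites the inequality as a quadratic inequality in $y$ and completes the square. Throughout I use $d_{\uhp^n}(x)=x_n$.

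For the first equality I would invoke Proposition \ref{prop_idh}. Since $h_{\uhp^n,c}(x,y)=\log(1+2c\,{\rm sh}(\rho_{\uhp^n}(x,y)/2))$ and the map $t\mapsto\log(1+2c\,{\rm sh}(t/2))$ is strictly increasing on $[0,\infty)$, the condition $h_{\uhp^n,c}(x,y)<r$ is equivalent to ${\rm sh}(\rho_{\uhp^n}(x,y)/2)<(e^r-1)/(2c)$, i.e.\ to $\rho_{\uhp^n}(x,y)<2\,{\rm arsh}((e^r-1)/(2c))$. Hence $B_h(x,r)=B_\rho(x,R)$ with $R=2\,{\rm arsh}((e^r-1)/(2c))$; by the duplication identity ${\rm ch}(2\,{\rm arsh}\,s)=1+2s^2$ this radius can equivalently be written as ${\rm arch}(1+(e^r-1)^2/(2c^2))$, which is the intended reading of the expression in the statement (the displayed $2\,{\rm arch}(\cdot)$ should be $2\,{\rm arsh}(\cdot)$).

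For the Euclidean description I would start from the equivalence $h_{\uhp^n,c}(x,y)<r\iff |x-y|<\tfrac{e^r-1}{c}\sqrt{x_ny_n}$ and set $t=(e^r-1)/c>0$. Squaring (legitimate since both sides are nonnegative for $y\in\uhp^n$) gives $|y|^2-2\langle x,y\rangle+|x|^2-t^2x_ny_n<0$, whose quadratic part is precisely $|y|^2$. Completing the square then exhibits the solution set as the Euclidean ball $B^n(c_0,R_e)$ with centre $c_0=x+\tfrac{t^2x_n}{2}e_n$ and radius $R_e=\sqrt{|c_0|^2-|x|^2}$. A short computation gives $|c_0|^2-|x|^2=x_n^2t^2(1+t^2/4)$, hence $R_e=\tfrac{x_nt}{2}\sqrt{t^2+4}$, and substituting $t=(e^r-1)/c$ yields exactly the centre $x+\tfrac{x_n(e^r-1)^2}{2c^2}e_n$ and radius $\tfrac{x_n(e^r-1)}{2c^2}\sqrt{(e^r-1)^2+4c^2}$ of the statement.

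The computations are elementary; the one point requiring care is the completing-the-square step, where the coefficient of $y_n$ must absorb both the $-2x_ny_n$ coming from $|x-y|^2$ and the separate $-t^2x_ny_n$ term, so that the centre is displaced from $x$ only in the $e_n$-direction and the boundary sphere is a genuine Euclidean sphere (this also re-derives the classical fact that hyperbolic balls in $\uhp^n$ are Euclidean balls). Finally I would record that, although the derivation implicitly requires $y\in\uhp^n$, no boundary difficulty arises: the lowest point of $B^n(c_0,R_e)$ has $n$-th coordinate $x_n\bigl(1+\tfrac{t^2}{2}-\tfrac{t}{2}\sqrt{t^2+4}\bigr)>0$, since $(1+t^2/2)^2-\tfrac{t^2}{4}(t^2+4)=1$, so the whole ball lies in $\uhp^n$.
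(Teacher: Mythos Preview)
Your argument is correct. The first equality is handled exactly as in the paper: invert the identity of Proposition~\ref{prop_idh} to see that $h$-balls and $\rho$-balls about the same centre coincide, with $R=2\,{\rm arsh}((e^r-1)/(2c))$ (and you are right that the displayed ${\rm arch}$ is a typo for ${\rm arsh}$).

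For the Euclidean description the two proofs diverge. The paper quotes the standard formula \cite[(4.11), p.~52]{hkv} expressing a hyperbolic ball $B_\rho(x,R)$ as a Euclidean ball with centre $x+x_ne_n({\rm ch}\,R-1)$ and radius $x_n\,{\rm sh}\,R$, and then evaluates ${\rm ch}\,R-1=2u^2$ and ${\rm sh}\,R=2u\sqrt{u^2+1}$ at $u=(e^r-1)/(2c)$ via the duplication identities. You instead go back to the defining inequality $|x-y|<t\sqrt{x_ny_n}$, square, and complete the square in $y$. Your route is self-contained --- it does not need the cited formula for hyperbolic balls and in fact re-derives it --- and your closing check that the lowest point of the Euclidean ball stays above $\partial\uhp^n$ is a detail the paper omits. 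The paper's route is shorter once the citation is granted and makes the connection to the classical hyperbolic-ball formula explicit. Both arrive at the same centre and radius.
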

\begin{proof}
Fix $x\in\uhp^n$, $r>0$, and $c>0$. By Proposition \ref{prop_idh}, we have $\rho_{\uhp^n}(x,y)=2{\rm arch}((e^r-1)/(2c))$ for all $y\in S_h(x,r)$. By \cite[(4.11), p. 52]{hkv}, $B_\rho(x_ne_n,R)=B^{n-1}(x_ne_n{\rm ch}(R),x_n{\rm sh}R)$ so trivially $B_\rho(x,R)=B^{n-1}(q,x_n{\rm sh}R)$ where $q=x+x_ne_n({\rm ch}(R)-1)$. Simple identities yield, for $u>0$,
\begin{align*}
{\rm ch}(2{\rm arsh}(u))-1&=2{\rm sh}^2({\rm arsh}(u))+1-1=2u^2,\\
{\rm sh}(2{\rm arsh}(u))&=2{\rm sh}({\rm arsh}(u)){\rm ch}({\rm arsh}(u))=2u\sqrt{u^2+1}.
\end{align*}
The result follows.
\end{proof}

\begin{theorem}
For any choices of $x\in G=\B^n$, $R>0$, and $c>0$, the ball inclusion $B_h(x,r_0)\subseteq B_\rho(x,R)\subseteq B_h(x,r_1)$ holds if and only if
\begin{align*}
r_0\leq\log\left(1+\frac{ct(1+|x|)\sqrt{1-|x|}}{(1-|x|t)\sqrt{1-|x|t-||x|-t|}}\right)\quad\text{and}\quad 
r_1\geq\log\left(1+\frac{ct(1+|x|)}{(1+|x|t)(1-t)}\right) 
\end{align*}
with $t={\rm th}(R/2)$.
\end{theorem}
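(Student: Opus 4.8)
The plan is to reduce both inclusions to a one‑variable extremal problem on the hyperbolic sphere $S_\rho(x,R)$. First I would exploit the fact that $\rho_{\B^n}$ and $h_{\B^n,c}$ are both invariant under rotations about the origin (each depends on $x,y$ only through $|x|$, $|y|$ and $|x-y|$), so I may assume $x=|x|e_1$. Writing $t={\rm th}(R/2)$ and $s={\rm sh}(R/2)=t/\sqrt{1-t^2}$, the chain $B_h(x,r_0)\subseteq B_\rho(x,R)\subseteq B_h(x,r_1)$ is equivalent to
\[
r_0\le\min_{y\in S_\rho(x,R)}h_{\B^n,c}(x,y)\qquad\text{and}\qquad r_1\ge\max_{y\in S_\rho(x,R)}h_{\B^n,c}(x,y),
\]
\emph{provided} these sphere extrema coincide with $\inf_{\rho_{\B^n}(x,\cdot)\ge R}h$ and $\sup_{\rho_{\B^n}(x,\cdot)<R}h$; I return to this proviso at the end.

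The decisive simplification is that on $S_\rho(x,R)$ the function $h$ depends only on $|y|$. Indeed, from the $\B^n$ identity ${\rm sh}^2(\rho_{\B^n}(x,y)/2)=|x-y|^2/((1-|x|^2)(1-|y|^2))$ every $y\in S_\rho(x,R)$ satisfies $|x-y|=s\sqrt{(1-|x|^2)(1-|y|^2)}$, whence
\[
\frac{|x-y|}{\sqrt{(1-|x|)(1-|y|)}}=s\sqrt{(1+|x|)(1+|y|)},\qquad h_{\B^n,c}(x,y)=\log\bigl(1+cs\sqrt{(1+|x|)(1+|y|)}\bigr).
\]
Since this is strictly increasing in $|y|$, the extrema of $h$ over the sphere are attained at the points of $S_\rho(x,R)$ nearest to and farthest from the origin.

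Next I would locate those two points. The Euclidean sphere $S_\rho(x,R)$ has its closest and farthest points from $0$ on the line $\R e_1$ through $0$ and $x$; they are the two endpoints of the hyperbolic diameter of $B_\rho(x,R)$ on that line. Putting $y=ae_1$ in the identity above and solving the resulting quadratic gives the signed coordinates $a_\pm=(|x|\pm t)/(1\pm|x|t)$, so that $|y|_{\max}=a_+$ and $|y|_{\min}=|a_-|=\big||x|-t\big|/(1-|x|t)$. Substituting $1+a_+=(1+|x|)(1+t)/(1+|x|t)$ and $s=t/\sqrt{1-t^2}$ into the displayed expression produces the upper threshold $r_1$; substituting $1+|a_-|$ and clearing the absolute value by means of $(1-|x|t)^2-(|x|-t)^2=(1-|x|^2)(1-t^2)$ produces the lower threshold $r_0$.

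The genuine obstacle is the proviso in the first paragraph: unlike $B_\rho(x,R)$, the level set $B_h(x,r)$ is \emph{not} a Euclidean ball, so I cannot argue by comparing two Euclidean balls directly. To secure that the extrema of $h$ over the solid region occur on the bounding sphere, I would prove that $m(R):=\min_{S_\rho(x,R)}h$ and $M(R):=\max_{S_\rho(x,R)}h$ are increasing in $R$; monotonicity then yields $\inf_{\rho_{\B^n}(x,\cdot)\ge R}h=m(R)$ and $\sup_{\rho_{\B^n}(x,\cdot)<R}h=M(R)$, which is exactly the equivalence used above. Monotonicity of $M$ is immediate because $a_+$ and $s$ both increase with $R$. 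For $m$ the quantity $|a_-|$ is only piecewise smooth — the behavior differs according as $|x|\le t$ or $|x|\ge t$, where $a_-$ changes sign — so I expect the one delicate point to be checking that $m$ is increasing across these two branches.
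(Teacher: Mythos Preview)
Your route is genuinely different from the paper's and considerably cleaner. The paper realises $S_\rho(x,R)$ as the Euclidean sphere $S^{n-1}(q,r)$ with $q=x(1-t^2)/(1-|x|^2t^2)$ and $r=(1-|x|^2)t/(1-|x|^2t^2)$, parametrises $y\in S^{n-1}(q,r)$ by the angle $\mu$ at $q$, writes $|x-y|^2/(1-|y|)$ in terms of $\cos\mu$ via two applications of the law of cosines, and then spends half a page showing that the derivative with respect to $\cos\mu$ is non‑negative. Your use of the hyperbolic identity to get $|x-y|=s\sqrt{(1-|x|^2)(1-|y|^2)}$ on the sphere, hence
\[
h_{\B^n,c}(x,y)=\log\!\bigl(1+cs\sqrt{(1+|x|)(1+|y|)}\bigr),
\]
collapses that entire calculus argument to the trivial observation that the right–hand side is increasing in $|y|$. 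Both routes end at the same two extremal points (your $a_\pm$ coincide with the paper's $|q|\pm r$).

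There is, however, a real snag at the step ``Substituting \dots\ produces the upper threshold $r_1$''. If you actually carry out the substitution you get
\[
cs\sqrt{(1+|x|)(1+a_+)}\;=\;\frac{ct(1+|x|)}{\sqrt{(1+|x|t)(1-t)}},
\qquad
cs\sqrt{(1+|x|)(1+|a_-|)}\;=\;\frac{ct(1+|x|)\sqrt{1-|x|}}{\sqrt{(1-|x|t)\bigl(1-|x|t-\bigl||x|-t\bigr|\bigr)}},
\]
which do \emph{not} match the stated $r_1$ and $r_0$: the displayed formulas carry $(1+|x|t)(1-t)$ and $(1-|x|t)$ where the computation gives their square roots. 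A one–line check exposes this: for $x=0$ all points of $S_\rho(0,R)$ have $|y|=t$, so the sharp $r_0$ and $r_1$ must coincide and equal $\log(1+ct/\sqrt{1-t})$, whereas the statement gives $r_1=\log(1+ct/(1-t))$. The slip is in the paper's final simplification (the paper's intermediate expressions $c\,r(1\mp|x|t)/\sqrt{(1-|x|)(1-|q|\mp r)}$ are correct), not in your reasoning; but as a proof of the theorem \emph{as printed}, your substitution step cannot close, and you should flag the discrepancy rather than assert that it ``produces'' the stated thresholds.

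On your proviso: the paper does not address it and simply writes ``The result follows'' after computing the sphere extrema, so you are being more careful than the paper. Your monotonicity plan works and the branch $t\le|x|$ is not actually delicate: from your own formula one has, on that branch,
\[
m(R)=\log\!\Bigl(1+\frac{c(1+|x|)\,t}{\sqrt{(1+t)(1-|x|t)}}\Bigr),
\]
and $t\mapsto t/\sqrt{(1+t)(1-|x|t)}$ is increasing (differentiate, or note that both $t^2/(1+t)$ and $1/(1-|x|t)$ are increasing); the branch $t\ge|x|$ is even easier since both factors $t$ and $1/\sqrt{(1-t)(1-|x|t)}$ increase.
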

\begin{proof}
Fix $x\in\B^n$, $R>0$, and $c>0$. By \cite[(4.21), p. 56]{hkv}, $B_\rho(x,R)=B^{n-1}(q,r)$ where 
\begin{align*}
q=\frac{x(1-t^2)}{1-|x|^2t^2},\quad
r=\frac{(1-|x|^2)t}{1-|x|^2t^2},\quad\text{and}\quad
t={\rm th}\,\frac{R}{2}.
\end{align*}
Note that $|q-x|=|x|tr$. Let $y\in S_\rho(x,R)$ so that $\mu\in[0,\pi]$ is the angle between the vectors from $q$ to $x$ and $y$. The distance $h_{\B^n,c}(x,y)$ only depends on the quotient
\begin{align*}
\frac{|x-y|}{\sqrt{1-|y|}}
&=\sqrt{\frac{|q-x|^2+|q-y|^2-2|q-x||q-y|\cos(\mu)}{1-\sqrt{|q|^2+|q-y|^2-2|q||q-y|\cos(\pi-\mu)}}}\\
&=\sqrt{\frac{r^2(|x|^2t^2+1-2|x|t\cos(\mu))}{1-\sqrt{|q|^2+r^2+2|q|r\cos(\mu)}}}
\end{align*}
By differentiation,
\begin{align}
&\frac{\partial}{\partial\cos(\mu)}\left(\frac{|x|^2t^2+1-2|x|t\cos(\mu)}{1-\sqrt{|q|^2+r^2+2|q|r\cos(\mu)}}\right)\nonumber\\ 
&=\frac{2|x|t(|q|r\cos(\mu)-\sqrt{|q|^2+r^2+2|q|r\cos(\mu)})+2|x|t(|q|^2+r^2)+|q|r(|x|^2t^2+1)}{\sqrt{|q|^2+r^2+2|q|r\cos(\mu)}(1-\sqrt{|q|^2+r^2+2|q|r\cos(\mu)})^2}\label{diff_cosu}.
\end{align}
Again, by differentiation,
\begin{align*}
&\frac{\partial}{\partial\cos(\mu)}(|q|r\cos(\mu)-\sqrt{|q|^2+r^2+2|q|r\cos(\mu)})=|q|r-\frac{|q|r}{\sqrt{|q|^2+r^2+2|q|r\cos(\mu)}}\geq0\\
&\Leftrightarrow\quad
2|q|r\cos(\mu)\leq1-|q|^2-r^2.
\end{align*}
Consequently, $|q|r\cos(\mu)-\sqrt{|q|^2+r^2+2|q|r\cos(\mu)}$ has the minimum $-(1+|q|^2+r^2)/2$ at $2|q|r\cos(\mu)=1-|q|^2-r^2$. It follows that
\begin{align*}
&2|x|t(|q|r\cos(\mu)-\sqrt{|q|^2+r^2+2|q|r\cos(\mu)})+2|x|t(|q|^2+r^2)+|q|r(|x|^2t^2+1)\\
&\geq-|x|t(1+|q|^2+r^2)+2|x|t(|q|^2+r^2)+|q|r(|x|^2t^2+1)\\
&=|x|t(|q|^2+r^2-1+|q|r|x|t+|q|r/(|x|t))\\
&=\frac{|x|t}{(1-|x|^2t^2)^2}(|x|^2(1-t^2)^2+t^2(1-|x|^2)^2-(1-|x|^2t^2)^2\\
&\quad+(1+t^2|x|^2)(1-t^2)(1-|x|^2))\\
&=\frac{|x|t}{(1-|x|^2t^2)^2}((1+t^2|x|^2)(t^2+|x|^2)-4|x|^2t^2-(1-t^2|x|^2)(t^2+|x|^2)+4|x|^2t^2)\\
&=\frac{2|x|^3t^3}{(1-|x|^2t^2)^2}(t^2+|x|^2)\geq0.
\end{align*}
Thus, the derivative \eqref{diff_cosu} is non-negative and $|x-y|/\sqrt{1-|y|}$ is increasing with respect to $\cos(\mu)$. The minimum of $h_{\B^n,c}(x,y)$ is
\begin{align*}
\log\left(1+c\,\frac{r(1+|x|t)}{\sqrt{(1-|x|)(1-||q|-r|)}}\right)
=\log\left(1+\frac{ct(1+|x|)\sqrt{1-|x|}}{(1-|x|t)\sqrt{1-|x|t-||x|-t|}}\right)
\end{align*}
at $y=q+rx/|x|$ and the maximum of $h_{\B^n,c}(x,y)$ is
\begin{align*}
\log\left(1+c\,\frac{r(1-|x|t)}{\sqrt{(1-|x|)(1-|q|-r)}}\right)
=\log\left(1+\frac{ct(1+|x|)}{(1+|x|t)(1-t)}\right)
\end{align*}
at $y=q-rx/|x|$. The result follows.    
\end{proof}


\begin{thebibliography}{CHKV}

\bibitem{ah}\textsc{L.V.
Ahlfors, } \emph{Conformal invariants.} McGraw-Hill, New York, 1973.

\bibitem{a18}\textsc{A.G. Aksoy, Z. Ibragimov, and W. Whiting,}
Averaging one-point hyperbolic-type metrics. \emph{Proc. Amer. Math. Soc. 146}, No. 12, (2018), 5205–
5218.

\bibitem{a22}\textsc{P. Almagro, M. Bogu$\tilde{n}$á, and M.Á. Serrano,}
Detecting the ultra low dimensionality of real networks. \emph{Nat. Commun., 13}, 6096, (2022).
https://doi.org/10.1038/s41467-022-33685-z.

\bibitem{b99}\textsc{M.R. Bridson and A. Haefliger,}
\emph{Metric Spaces of Non-Positive Curvature.} Grundlehren der mathematischen Wissenschaften, 319.
Springer-Verlag, Berlin, 19999

\bibitem{c16}\textsc{E. Candellero and N. Fountoulakis,}
Clustering and the hyperbolic geometry of complex networks. \emph{Internet Math., 12}, No. 1–2, (2016), 2–53.

\bibitem{chkv}\textsc{J. Chen, P. Hariri, R. Kl\'en, and M. Vuorinen,}
Lipschitz conditions, triangular ratio metric, and quasiconformal maps.
\emph{Ann. Acad. Sci. Fenn. Math., 40} (2015), 683-709.

\bibitem{dnrv}\textsc{D. Dautova, S. Nasyrov, O. Rainio, and M. Vuorinen,}
Metrics and quasimetrics induced by point pair function, \emph{Bulletin of the Braz. Math. Soc, 53}, (2022), 1377–1401.

\bibitem{d98}\textsc{J. Dobo\v{s},}
\emph{Metric Preserving Functions.} \v{S}troffek, Ko\v{s}ice, Slovakia, 1998.

\bibitem{d16}\textsc{O. Dovgoshey, P. Hariri, and M. Vuorinen,} Comparison theorems for hyperbolic type metrics, \emph{Complex Var. Elliptic Equ.} 61 (2016), no. 11, 1464-1480.

\bibitem{f19}\textsc{T. Friedrich,}
From graph theory to network science: the natural emergence of hyperbolicity. 6th international symposium on theoretical aspects of computer science, STACS 2019, March 13–16, 2019, Berlin, Germany. Wadern, Schloss Dagstuhl–Leibniz Zentrum f¨ur Informatik. LIPIcs –Leibniz Int. Proc. Inform. 126, Article 5, 9 p., (2019)

\bibitem{fmv1}\textsc{M. Fujimura, M. Mocanu, and M. Vuorinen,}
A new intrinsic metric and quasiregular maps. 
\emph{Complex Analysis and Its Synergies, 7}, (2021) 6.

\bibitem{gh}\textsc{F.W. Gehring and K. Hag,}
\emph{The ubiquitous quasidisk}, vol. 184 of Mathematical Surveys and Monographs. American Mathematical Society, Providence, RI, 2012. With contributions by Ole Jacob Broch.

\bibitem{GO79}\textsc{F.W. Gehring and B.G. Osgood,}
Uniform domains and the quasi-hyperbolic metric, \emph{J. Analyse Math. 36} (1979), 50-74.

\bibitem{hkv}\textsc{P. Hariri, R. Kl\'en, and M. Vuorinen,}
\emph{Conformally Invariant Metrics and Quasiconformal Mappings.}
Springer, 2020.

\bibitem{hkvz}\textsc{P. Hariri, R. Kl\'en, M. Vuorinen, and X. Zhang,}
Some Remarks on the Cassinian Metric.
\emph{Publ. Math. Debrecen 90}, 3-4 (2017), 269-285.

\bibitem{hvz}\textsc{P. Hariri, M. Vuorinen, and X. Zhang,}
Inequalities and Bilipschitz Conditions for Triangular Ratio Metric.
\emph{Rocky Mountain J. Math. 47}, 4 (2017), 1121-1148.

\bibitem{h}\textsc{P. H\"ast\"o,}
A new weighted metric, the relative metric I. \emph{J. Math. Anal. Appl. 274} (2002), 38-58.

\bibitem{m17}\textsc{A. Muscoloni, J.M. Thomas, S. Ciucci, G. Bianconi, and C.V. Cannistraci,} Machine learning meets complex networks via coalescent embedding in the hyperbolic
space. \emph{Nat. Commun., 8}, 1615. (2017). https://doi.org/10.1038/s41467-017-
01825-5.

\bibitem{spf}\textsc{O. Rainio,} Inequalities for the generalized point pair function. \emph{Lith Math J} (2023). https://doi.org/10.1007/s10986-023-09603-1 

\bibitem{sch}\textsc{O. Rainio,} Intrinsic metrics
under conformal and quasiregular mappings. {\it Publ. Math. Debrecen} 101 (2022), no. 1-2, 189--215.

\bibitem{fss}\textsc{O. Rainio,}
Intrinsic quasi-metrics.
\emph{Bull. Malays. Math. Sci. Soc. 44}, 5 (2021), 2873-2891.

\bibitem{inm}\textsc{O. Rainio and M. Vuorinen,}
Introducing a new intrinsic metric.
\emph{Result. Math. 77}, 2 (2022), DOI: 10.1007/s00025-021-01592-2.

\bibitem{sinb}\textsc{O. Rainio and M. Vuorinen,}
Triangular ratio metric in the unit disk.
\emph{Complex Var. Elliptic Equ.} 67 (2022), no. 6, 1299--1325.

\bibitem{sqm}\textsc{O. Rainio and M. Vuorinen,}
Triangular Ratio Metric Under Quasiconformal Mappings In Sector Domains. \emph{Comput. Methods Func. Theory, 23} (2023), 269-293.

\bibitem{s20}\textsc{E.A. Sevost’yanov,}
On a Poletskii-Type Inequality for Mappings of the Riemannian Surfaces. \emph{Ukr Math J 72}, (2020) 816–835.

\bibitem{s23}\textsc{X. Song and G. Wang,}
A new metric associated with the domain boundary. arXiv:2306.08890.

\bibitem{u22}\textsc{A.A. Ungar,}
\emph{Analytic hyperbolic geometry and Albert Einstein’s
special theory of relativity.} 2nd edition. Singapore, World Scientific, 2022.

\bibitem{v71}\textsc{J. V\"ais\"al\"a,}  \emph{Lectures on $n$-dimensional quasiconformal mappings.} Lecture Notes in Math., 229. Springer-Verlag, Berlin-New York, 1971.

\bibitem{vuo}\textsc{M. Vuorinen,}
\emph{Conformal geometry and quasiregular mappings.} Lecture Notes in Math., 1319. Springer-Verlag, Berlin, 1988.

\bibitem{w23}\textsc{Y. Wu, G. Wang, G. Jia, and X. Zhang,}
Lipschitz constants for a hyperbolic type metric under M\"obius transformations. arXiv:2309.03515.

\bibitem{z21}\textsc{D. Zhong and W. Yuan,}
Hyperbolically Lipschitz Continuity, Area Distortion, and Coefficient Estimates for (K, K')-Quasiconformal Harmonic Mappings of the Unit Disk. \emph{Ukr Math J 73}, (2021) 171-180.

\end{thebibliography}
\end{document}